\newcommand{\R}{\mathbb{R}}
\newcommand{\define}{\equiv}
\newcommand{\normtwo}[1]{\| #1\|_2}
\newcommand{\normf}[1]{\| #1\|_\text{F}}
\newcommand{\mat}[1]{{#1}}
\renewcommand{\vec}[1]{{#1}}
\newcommand{\vech}[1]{\widehat{{#1}}}
\newcommand{\diag}{\mathsf{diag}}
\newcommand{\mc}[1]{\mathcal{#1}}
\newcommand{\mb}[1]{\mathbb{#1}}
\newcommand{\mch}[1]{\widehat{\mathcal{#1}}}
\newcommand{\Wh}{\widehat{W}}
\newcommand{\Ah}{\widehat{A}}
\newcommand{\rank}{\mathsf{rank}\,}
\newcommand{\deim}{\widehat{f}_\text{DEIM}}
\newcommand{\rdeim}{\widehat{f}_\text{R-DEIM}}
\newcommand{\D}{\mathbb{D}}
\newcommand{\Dh}{\widehat{\mathbb{D}}}
\newcommand{\bmat}[1]{\begin{bmatrix}#1\end{bmatrix}}
\newcommand{\Sh}{\widehat{S}}
\newcommand{\range}{\mc{R}}
\newcommand{\cls}{C_\text{LS}}
\newcommand{\changes}[1]{{#1}}
\Crefname{prop}{Proposition}{Propositions}
\Crefname{algocf}{Algorithm}{Algorithms}
\newtheorem{remark}{Remark}
\title{Randomized Discrete Empirical Interpolation Method for Nonlinear Model Reduction}
\author{Arvind K. Saibaba \thanks{Department of Mathematics, North Carolina State University asaibab@ncsu.edu. This work was funded, in part, by the National Science Foundation through the awards DMS 1720398 and 1821149.}} 
\begin{document} 
\maketitle

\begin{abstract}  Discrete empirical interpolation method (DEIM) is a popular
technique for nonlinear model reduction and it has two main ingredients: an
interpolating basis that is computed from a collection of snapshots of the solution,
and a set of indices which determine the nonlinear components to be simulated.
The computation of these two ingredients dominates the overall cost of
the DEIM algorithm. To specifically address these two issues, we present
randomized versions of the DEIM algorithm. There are three main contributions
of this paper. First, we use randomized range finding algorithms
to efficiently find an approximate DEIM basis. Second, we develop
randomized subset selection tools, based on leverage scores, to efficiently
select the nonlinear components. Third, we develop  several 
theoretical results that quantify the accuracy of the randomization on the DEIM
approximation. We also present numerical experiments that demonstrate the
benefits of the proposed algorithms.  
\end{abstract}

\section{Introduction} Detailed mathematical models of weather prediction and
neuroscience routinely generate large-scale problems having over a billion
unknowns.  The goal of model reduction is to replace
computationally expensive full-scale models by reduced order models (ROMs) that
are cheaper to evaluate and that preserve the important underlying physics in
the full-scale models.  Development of effective ROMs will enable efficient and
accurate simulation of a wide range of detailed complex physical phenomena, as
well as benefit a host of applications in inverse problems, data assimilation,
design, control, optimization, and uncertainty quantification.

In a typical model reduction technique, there are two distinct phases: an
\textit{offline phase} in which the full model is simulated for a range of
parameters or specifications and the outputs of this simulations are used to construct the ROM,
and an \textit{online phase}, in which the ROM is simulated for the desired
parameter, or specification. A successful ROM has two features that are hard to
achieve simultaneously: the ROM should be accurate over the desired range of
parameters and specifications, and the online phase should be inexpensive and
the dominant computational cost should be in the offline phase.  A popular
method for model reduction is the proper orthogonal decomposition (POD), which
has been successfully used in a wide range of partial differential equation
(PDE)-based applications, and is reviewed in \cref{ssec:pod}. While POD
approach has broad applicability, the computational efficiency of the POD
demands that the underlying PDE has to be linear, or the parameter dependence
has to be of a specific type, e.g., affine parameter dependence. To address
this deficiency, many methods have been proposed in the literature such as
gappy POD interpolation method, empirical interpolation method (EIM), and its
discrete variant, the discrete empirical interpolation method (DEIM). Reviews 
of various model reduction techniques are provided in the survey
paper~\cite{benner2015survey}, recent books and
monographs~\cite{hesthaven2016certified,quarteroni2015reduced}.

The DEIM interpolation framework computes an approximation of a nonlinear
function $f: \mb{R}^n \rightarrow \mb{R}^n$ by the means of a basis $W \in
\R^{n\times r}$ used to interpolate the function, and a set of indices,
defining a point selection operator $S \in \R^{n\times r}$, at which the
nonlinear function is evaluated. In \cref{ssec:deim}, we explain how $W$ and
$S$ can be used to approximate the function $f$; here, we describe the major
bottlenecks in computing the DEIM approximation.

\begin{itemize} 
\item The basis $W$ is constructed as follows: several
representative samples---also called as snapshots---of the function $f(\cdot)$
are collected and arranged as columns of a matrix, known as the snapshot matrix. The left singular vectors of
this snapshot matrix form the desired DEIM basis---henceforth, we call this the
standard basis. The dimension of the subspace spanned by the DEIM basis, denoted by $r$, depends on the number of
dominant singular values of the snapshot matrix. Computing a truncated singular value decomposition (SVD)
costs $\mc{O}(rn_sn)$, where  $n_s$ is the number of snapshots;  our approach replaces the SVD by a
randomized SVD. 

\item Finding a  set of good indices is a  combinatorially hard problem known
as {\em subset selection} (in the DEIM literature, this is known as point
selection, which we also adopt in this manuscript). Various deterministic
subset selection techniques have been proposed in the literature: based on
pivoted LU factorization~\cite{chaturantabut2010nonlinear,sorensen2016deim},
pivoted QR factorization~\cite{drmac2016new}, strong rank-revealing QR
factorization~\cite{drmac2017discrete}. The computational cost is roughly
$\mc{O}(nr^2)$; we use randomized subset selection techniques which lowers this
cost and can exploit parallelism.  \end{itemize}	

 Both of these operations are computationally expensive when $n$ is large. Our
paper specifically addresses these computational challenges using randomized
algorithms, thereby enabling efficient large-scale implementation of DEIM.

To motivate the development of randomized algorithm for DEIM, we briefly review
randomized algorithms in other applications. Recently, randomized algorithms
have been developed for accurate low-rank approximations to matrices arising
from large datasets. The basic idea of these methods is to use random sampling
to identify a subspace which approximately captures the range of the
matrix~\cite{halko2011finding}. The matrix is then projected onto this subspace
and then deterministic linear algebraic techniques can be used to manipulate
the projected matrix to obtain the desired low-rank approximation.   Besides
low-rank approximations, randomized methods (based on leverage score sampling
and other subset selection techniques) have been developed for other linear
algebraic problems such as  least squares problems, regression, and computation
of interpretable decompositions such as CX/CUR decompositions, see recent
survey articles~\cite{mahoney2011randomized,drineas2016randnla}.  Randomized
methods have several advantages over their corresponding classical
counterparts: typically, they are computationally efficient, numerically
robust, easy to implement, scale well in a distributed computing setting, and
have well-developed error analysis.

\paragraph{Contributions and Contents} We present randomized algorithms for
DEIM that enable nonlinear dimensionality reduction for several large-scale applications. 

We present randomized approaches (\cref{sec:randbasis}) for efficient
construction of a DEIM basis $\Wh$. The algorithms come in two flavors depending on
whether the target rank $r$ is known or not. When the target rank is known, we
present a basic version and a more accurate version based on subspace
iteration. When the target rank $r$ is unknown, we provide an adaptive
algorithm for computing $\Wh$. The computational and storage
advantages of the randomized algorithms for constructing the DEIM basis are
highlighted. 

We present a detailed analysis of the error (\cref{sec:error}) when a
randomized basis instead of the standard DEIM basis. A crucial component of our
analysis involves the largest canonical angle between the subspaces spanned by
the standard basis $W$, and an approximate basis $\Wh$. This analysis is
applicable to any approximate basis $\Wh$ and therefore this has broad appeal
beyond the context of randomized algorithms. We also present specific results
that explicitly account for the randomness on the accuracy of the DEIM
approximation.

We propose two randomized point selection methods for DEIM approximation
(\cref{sec:point}): leverage score sampling and hybrid algorithms. For each
method, we present theoretical bounds on the number of points required for the desired
accuracy. The hybrid point selection technique combines the computational
advantages of the randomized methods with the accuracy of deterministic
methods. These sampling methods have been proposed in the context of matrix CUR
decompositions; our analysis for the DEIM approximation is new. 

Numerical experiments (\cref{sec:num}) demonstrate the computational benefits,
the accuracy of the randomized approaches and insight into the choice of
parameters for various algorithms presented in this paper. 

\paragraph{Related work} The idea of using randomization to accelerate
computations in model reduction appears to be relatively new and here we briefly
review the literature. Randomized matrix methods, similar to the ones used in
this paper, have been used to approximate 
POD~\cite{yu2015randomized,yu2016computationally,bach2019fixed}, and dynamic mode
decomposition~\cite{erichson2016randomized,erichson2017randomized,bistrian2017randomized}.
Recent work in~\cite{aslan2017randomized} uses randomization for reducing the
cost associated with multiple right hand sides in nonlinear model reduction.
The resulting ROM dramatically reduces the cost of solving a PDE-based inverse
problem. However, none of these references directly tackle the DEIM
approximation, which is the central focus of this paper. Randomized sampling
approaches for choosing the DEIM indices have been proposed
in~\cite{drmac2016new} but no analysis of the randomization was presented.
Another noteworthy paper~\cite{peherstorfer2018stabilizing}, uses randomized
oversampling to address stability issues with DEIM.

\section{Preliminaries}
We briefly review the POD and DEIM approaches for model order reduction. 
\subsection{Proper Orthogonal Decomposition}\label{ssec:pod}
We explain the POD approach in the context of a nonlinear dynamical system that takes the form
\begin{equation}\label{e_dynamical}
\frac{d\vec{x}(t)}{dt} = \mat{M}\mat{x}(t) + \vec{f}(\vec{x}(t)), \qquad x(0) = x_0
\end{equation}
where $\mat{M}  \in \R^{n\times n}$. When $n$ is large, the simulation of the dynamical system can be computationally expensive, and we turn to reduced order models to lower the computational cost. In the POD approach, the dynamical system is first simulated numerically and the ``snapshots'' of the system at multiple times $ 0  \leq t_1 < \dots < t_{n_t} \leq T$, as $\vec{x}_j = \vec{x}(t_j) $ for $j=1,\dots,n_t$ are collected into the POD snapshot matrix 
\[ \mat{F} = \bmat{\vec{x}_1 & \dots & \vec{x}_{n_t}} \in \R^{n\times n_t}. \]
We then  compute its thin SVD $ \mat{F} = \mat{Y}\mat{\Sigma }\mat{Z}^\top$. The POD approach chooses the $k \leq \rho = \rank(F)$ singular vectors corresponding to the largest singular values, collected in a matrix $\mat{V}_k = \mat{Y}(:,1:k)$ (using MATLAB notation). The basis, thus obtained, also solves the following optimization problem
\begin{equation}
\min_{{V} \in \R^{n\times k}: {V}^\top{V}= \mat{I}_k} \normf{ ({I}_n - VV^\top)\mat{F} }^2   =  \sum_{j=k+1}^{n_t} \sigma_j^2(F),
\end{equation}
where $\sigma_j(F)$ are the singular values of $F$. 
The choice of snapshots is an important issue in determining an effective POD basis and is discussed in~\cite{quarteroni2015reduced,benner2015survey,hesthaven2016certified}. Assuming that we have an effective basis $\mat{V}_k$, the solution $\vec{x}(t)$ can be approximated to be constrained in the span of the basis of the columns of $\mat{V}_k$, i.e., $\vec{x}(t) \approx \mat{V}_k\vech{x}(t)$. Next, the reduced system is obtained by a Galerkin projection onto $\range(\mat{V}_k)$; the dynamics of $\vech{x}(t)$ is given by 
\[\frac{d\vech{x}(t)}{dt} = \mat{V}_k^\top \mat{M}\mat{V}_k\vech{x}(t) + \mat{V}_k^\top\vec{f}(\mat{V}_k\vech{x}(t)), \qquad \vech{x}(t) = V_k^\top x_0.\]
This approach, known as the {\em POD-Galerkin} method, is an effective way of reducing the dimensionality of the system of equations represented in~\cref{e_dynamical}. This approach is more generally applicable to other applications such as parameterized PDEs.

We briefly discuss the computational cost of the POD-Galerkin approach. Note
that the matrix $\widehat{\mat{M}} \define \mat{V}_k^\top \mat{M}\mat{V}_k$ can
be precomputed. If the nonlinear term $f = 0$, then the cost of simulating the
reduced system for $\vech{x}(t)$ is independent of $n$, the dimension of the
full order system. However, the evaluation of the nonlinear term $V_k^\top
f(\mat{V}_k\vech{x}(t))$ has computational complexity that depends on $n$. As a
result, evaluation of this term may still be as expensive as solving the
original system. 

\subsection{DEIM approximation}\label{ssec:deim} The DEIM approximation was
proposed to address the deficiency of POD-Galerkin approach for nonlinear
dynamical systems. Given a collection of snapshots of the full order dynamical
system 
$$A = \bmat{f(\vec{x}_1) & \dots & f(\vec{x}_{n_s})},$$ 
a projection basis is computed by retaining the $r$ left-singular vectors
corresponding to the top-$r$ singular values. We denote this standard DEIM
basis by $\mat{W} \in \R^{n\times r}$. The DEIM approach then selects $s$
distinct rows from the matrix $\mat{W}$; in particular, denoting the row
indices $\{t_1,\dots,t_s\}$ we define the {\em selection operator}
\[ S =  \bmat{\vec{e}_{t_1} &  \cdots & \vec{e}_{t_s}} \in \R^{n\times s} ,\]
where $\vec{e}_{t_i}$ is the $t_i$-th column of the $n\times n$ identity matrix.

We define the DEIM projector $\D \define W (S^\top W)^{\dagger}S^\top$ that satisfies $\D^2 = \D$ (i.e., $\D$ is idempotent) and
if $\D \neq 0$ and $\D \neq I_n$, then 
\begin{equation}\label{oblique}
	\normtwo{\D} = \normtwo{ \mat{I}_n - \D} .
\end{equation}
The equality follows from the property of oblique projectors~\cite{szyld2006many}. Since $\mat{S}$ and $\mat{W}$ have orthonormal columns and $\normtwo{\cdot}$ is unitarily invariant, we also have the equality $\|\D\|_2 =  \normtwo{(\mat{S}^\top\mat{W})^{\dagger}}$. 
Given the DEIM projector, the DEIM approximation of $\vec{f}$ can be expressed as 
\begin{equation} \label{eqn:deimapprox}
\deim \define \D f . 
\end{equation}

We recapitulate a few properties of the DEIM projector that will be useful in subsequent analysis. Let $P_W \equiv WW^\top$ and $P_S = SS^\top$ be two orthogonal projectors corresponding to the bases $W$ and $S$ respectively. Suppose that $\rank(\D) = \rank(S^\top W) = r = s$. The following relations hold 
\begin{align} \label{eqn:obl0}
\D = & \>  \D P_S = P_W\D, \\ \label{eqn:obl1}
\D P_W = & \> P_W,  &  I_n -\D = & (I_n-\D)(I_n-P_W), \\ \label{eqn:obl2}
P_S\D = & \> P_S, & I_n - \D =& (I_n-P_S)(I_n-\D).
\end{align}

\begin{remark} {In the original DEIM approach~\cite{chaturantabut2010nonlinear},
the number of selected indices $s$ equals the dimension of the DEIM basis $r$.
The case $s \neq r$ has its origins {in gappy POD (see~\cite[Remark 10.4]{quarteroni2015reduced} and references therein). The implications for the interpolation and projection properties of the DEIM operator have been discussed  in detail in~\cite[section 3]{drmac2017discrete}}.
Of importance in our analysis will be the case $s \geq r$ and $\rank(\D) = r$.
In this case, \cref{eqn:obl0,eqn:obl1} hold, but \cref{eqn:obl2} no longer
holds, see~\cite[section 3]{drmac2017discrete}. }  \end{remark}

We also have the following the error in the DEIM approximation. 
\begin{lemma} \label{l_deim} Let the DEIM approximation be defined in~\cref{eqn:deimapprox}, and let $\rank(\D) = \rank(S^\top W) = r$ with $s\geq r$ 
	\[ \normtwo{\vec{f} - \deim} \leq \|\D\|_2 \normtwo{(I_n-WW^\top) {f}} .\]
\end{lemma}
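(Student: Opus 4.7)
The plan is to combine the oblique projector identity \eqref{eqn:obl1} with the norm equality \eqref{oblique}; no randomness or spectral information about $W$ enters, so the argument is short and algebraic.

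First, I would write the error as $f - \widehat{f}_{\text{DEIM}} = (I_n - \mathbb{D})f$ by the definition \eqref{eqn:deimapprox}. Then I would apply the second identity in \eqref{eqn:obl1}, namely $I_n - \mathbb{D} = (I_n - \mathbb{D})(I_n - P_W)$, to rewrite the error as $(I_n - \mathbb{D})(I_n - WW^\top)f$. This identity is where the hypothesis $\mathrm{rank}(\mathbb{D}) = \mathrm{rank}(S^\top W) = r$ is used; note that the companion identity \eqref{eqn:obl2} (which fails when $s > r$) is not needed here, so the claim indeed holds in the stated regime $s \geq r$.

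Next, I would take the $2$-norm, apply submultiplicativity, and invoke \eqref{oblique} to convert $\|I_n - \mathbb{D}\|_2$ into $\|\mathbb{D}\|_2$. This yields
\[
\|f - \widehat{f}_{\text{DEIM}}\|_2 \;\leq\; \|I_n - \mathbb{D}\|_2 \, \|(I_n - WW^\top)f\|_2 \;=\; \|\mathbb{D}\|_2 \, \|(I_n - WW^\top)f\|_2,
\]
which is the desired bound.

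There is no real obstacle: the only subtlety is to make sure \eqref{oblique} is applicable, which requires $\mathbb{D} \neq 0$ and $\mathbb{D} \neq I_n$. Both are guaranteed under the hypotheses (since $\mathrm{rank}(\mathbb{D}) = r$ with $0 < r < n$ in any nontrivial DEIM setting); in the degenerate cases $\mathbb{D} = 0$ or $\mathbb{D} = I_n$ the inequality either reduces to $\|f\|_2 \leq \|f\|_2$ or to $0 \leq 0$, so it holds trivially. It is also worth noting that the result is stated for \emph{any} $f$, and no structural assumption on $f$ (such as lying near $\mathrm{range}(W)$) is required; the factor $\|(I_n - WW^\top)f\|_2$ already captures exactly how far $f$ is from being representable in the DEIM basis, while $\|\mathbb{D}\|_2$ captures the conditioning of the oblique interpolation.
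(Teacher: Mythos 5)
Your proof is correct and follows essentially the same route as the paper: rewrite the error via the identity $I_n-\D = (I_n-\D)(I_n-P_W)$ from \cref{eqn:obl1}, apply submultiplicativity, and use \cref{oblique} to replace $\|I_n-\D\|_2$ by $\|\D\|_2$, with the same caveat that $\D\neq 0, I_n$. (Only your parenthetical about the degenerate case $\D=0$ is off---there the right-hand side would vanish while the left-hand side is $\|f\|_2$---but that case is excluded by the rank hypothesis and plays no role in the argument.)
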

\begin{proof}
{See \cite[Lemma 3.2]{chaturantabut2010nonlinear}. }
\end{proof}

\begin{remark}\label{rem:sigmar}
The error in the function approximation $\normtwo{(I_n-WW^\top) {f}}$ depends on the quality of the basis $W$. While \textit{a priori} bounds for this term are difficult to derive. {In this work,  we make the rather strong assumption that $\normtwo{(I_n-WW^\top) {f}} \approx \sigma_{r+1}(A)$ (see~\cite[Equation (10.20)]{quarteroni2015reduced} and the discussion surrounding it for a justification)}.  That is, the error depends on the largest discarded singular value of the snapshot matrix. 
\end{remark}

The quantity $\|\D\|_2 $ can be interpreted as the condition number of the DEIM
approximation. In what follows, we also refer to it as the DEIM error constant
and it depends on the particular point selection technique that is used. A
discussion of this is provided in~\cref{sec:point}. When $S$ and $W$ have
orthonormal columns and $\rank(S^\top W) = r$ then $\|\D\|_2 = \normtwo{(S^\top W)^\dagger}$.

It remains to be shown, how to use the DEIM approximation along with the POD.
Suppose that the DEIM basis $W$ and the selection operator $S$ have been
determined. In the dimension reduced version of the nonlinear dynamical system,
replace $f$ by $\deim$, i.e.,
\[ \mat{V}_k^\top\vec{f}(\mat{V}_k\vech{x}(t)) \approx  (\mat{V}_k^\top W) (S^\top W)^{\dagger} S^\top\vec{f}(\mat{V}_k\vech{x}(t)).\]
In the offline stage, the matrices $(\mat{V}_k^\top W)$ and a factorization of
$S^\top W$ can be precomputed. This involves a computational cost of
$\mc{O}(nkr + r^3)$. In the online stage, instead of evaluating the nonlinear
function, only $s \geq r$ selected number of components of the function will be
evaluated at the indices that determine the
columns of the  selection operator $S$.

The computation of {a standard DEIM basis} $W$ and the interpolating indices
that define $S$ constitute the major bottlenecks in the large-scale
implementation of DEIM. In \cref{sec:randbasis}, we develop randomized
algorithms to accelerate the computation of the basis $W$ and in
\cref{sec:point}, we develop efficient randomized point selection algorithms.

\section{Randomized algorithm for DEIM basis}\label{sec:randbasis}
 Let $A \in \R^{n\times n_s}$ with $n \geq n_s$  and $r \leq \rank(A)$ be the matrix of snapshots. We can write the SVD of $A$ as 
\[ A = \bmat{W_1 & W_2} \bmat{ \Sigma_1 \\ & \Sigma_2} \bmat{Z_1^\top \\ Z_2^\top}. \]
Here $W_1 \in \R^{n\times r}$ and $W_2 \in \R^{n\times (n -r)}$ contain the
left singular vectors, whereas $Z_1 \in \R^{n_s\times r}$ and $Z_2 \in
\R^{n_s\times (n_s -r)}$ contain the right singular vectors. The matrices
$\Sigma_1 = \diag(\sigma_1,\dots,\sigma_r)\in \R^{r\times r}$ and $\Sigma_2 =
\diag(\sigma_{r+1},\dots,\sigma_{n_s}) \in \R^{(n-r)\times (n_s-r)}$ contain
the singular values of $A$ in decreasing order.  Furthermore,
$\|\Sigma_1^{-1}\|_2 = 1/\sigma_r$ and $\|\Sigma_2\|_2 = \sigma_{r+1}$. Write
$A = A_r + A_{r,\perp}$, where by the Eckart-Young theorem, $A_r \equiv
W_1\Sigma_1 Z_1^\top$ is the best rank-$r$ approximation to $A$~\cite[section 4.5]{stewart1990matrix}.

The standard DEIM approximation uses $W=W_1$. The randomized DEIM approximation
replaces the exact SVD with a randomized SVD. Here and henceforth, we refer to
the basis generated using a randomized algorithm as the R-DEIM basis, and the
resulting approximation as the R-DEIM approximation.

\subsection{Randomized SVD} We briefly review an idealized version of the
randomized range finding algorithm. Suppose the target rank is known and
denoted by $r$. Draw a standard Gaussian random matrix $\Omega \in \R^{n\times
r}$ (i.e., a matrix with entries independent and identically distributed normal variables having 
mean $0$ and variance $1$).  Form the matrix $Y = A\Omega$ (which is often
called the ``sketch matrix'' or ``sketch''), compute an orthonormal basis for
$\mc{R}(Y)$ using the thin QR factorization $Y= QR$. In practice, instead of
$r$ columns, $\ell = r + p$ columns are drawn; here $p > 0$ is a small
oversampling parameter. A low-rank approximation to $A$ can be obtained as $A
\approx Q(Q^\top A)$.  Compute the top $r$ left singular vectors of $B \equiv
Q^\top A$ by $W_r$, and obtain the approximate basis as $\Wh = QW_r$. This is
summarized in \cref{alg:basic}.  

\LinesNumbered
\begin{algorithm2e}[!ht]
\DontPrintSemicolon
\SetKwInput{Input}{Input}
\SetKwInput{Output}{Output}
	\Input{ Snapshot matrix $A \in \R^{n\times n_s}$, target rank $r$, oversampling parameter $p \geq 1$ such that $r + p \leq \min\{n,n_s\}$. }
	\Output{ Basis $\Wh$ with orthonormal columns.}
	 Draw a standard Gaussian matrix $\Omega \in \R^{n_s\times (r+p)}$.\;
	 Form $Y = A\Omega$ and compute thin-QR factorization $Y = QR$. \;
	 Form $B = Q^\top A$. Let $W_r$ be the left singular vectors corresponding to top $r$ singular values of $B$. \;
	Form $\Wh \leftarrow QW_r$. \;
\caption{Basic randomized range-finding algorithm}
\label{alg:basic}
\end{algorithm2e}
The error in the low-rank approximation can be obtained by~\cite[Theorem 10.6]{halko2011finding} (when $p \geq 2$ and $n \geq n_s$):   
\[ \mb{E}_\Omega\,\| (I_n-QQ^\top)A\|_2 \leq \left(1 + \sqrt{\frac{r}{p-1}}\right) \sigma_{r+1} +  \frac{e\sqrt{r+p}}{p}\left(\sum_{j=r+1}^{n_s}\sigma_j^2 \right)^{1/2} .\]
However, the error in the low-rank representation does not fully explain the
error in the R-DEIM approximation. As shown in \cref{sec:error}, we will need
to bound the canonical angles between the subspaces spanned by the singular
vectors.  

In practice, the target rank $r$ may not be known in advance. In the standard
DEIM approach, the rank $r$ is chosen based on the decay of the singular values
of $A$. However, the exact singular values are not known to us and therefore, a
new approach is needed. We use the approach in~\cite{gu2016efficient} that
adaptively determine the target rank $r$.

\subsection{Adaptive randomized range finder}\label{ssec:adapt}

In the standard DEIM approach, the truncation index is taken to be the smallest
index $r$ which satisfies \[ \frac{\|A-A_r\|_F^2}{\|A\|_F^2} =
\frac{\sum_{j=r+1}^{n_s}\sigma_{j}^2(A)}{\sum_{j=1}^{n_s}\sigma_{j}^2(A)} \leq
\epsilon_\text{tol}^2. \] 
{The matrix $A_r$ was defined at the start of this section. }
This condition ensures that the relative error of the
low-rank representation, as measured in the Frobenius norm, is smaller than a
positive user-defined tolerance $\epsilon_\text{tol}$. Based on this criterion,
we require the R-DEIM basis $\Wh$ to satisfy the condition
\begin{equation}\label{eqn:alterror} 
\|A - \Wh\Wh^\top A\|_F^2 \leq\epsilon_\text{tol}^2\|A\|_F^2.  
\end{equation}
Several adaptive randomized range finding algorithms were presented in the
literature~\cite{halko2011finding,martinsson2016randomized,gu2016efficient,gorman2018matrix}.
In this paper, we adopt the approach presented in~\cite{gu2016efficient}. An
outline of this algorithm is given in \cref{alg:rangetwo}; however, we refer
the reader to~\cite[Algorithm 3]{gu2016efficient} for details regarding the
implementation. Another variation \cite[Algorithm 4]{gu2016efficient} combines
an adaptive strategy with the subspace iteration for enhanced accuracy. 

\LinesNumbered
\begin{algorithm2e}[!ht]
\DontPrintSemicolon
\SetKwInput{Input}{Input}
\SetKwInput{Output}{Output}
\Input{ Snapshot matrix $A \in \R^{n\times n_s}$, block size $b \geq 1$, tolerance $\epsilon_\text{tol} > 0$, factor $\alpha$.}
\Output{Basis $\Wh$ with orthonormal columns.}
 Compute $\alpha = \|A\|_F^2$.\; 
 $W \leftarrow  [] $. \;
 Parameter $\beta=0$.	 \tcp{Norm of $B=Q^\top A$.} 
\While {$\beta \leq \alpha (1-\epsilon_\text{tol}^2)$} {
 Draw a standard Gaussian matrix $\Omega \in \R^{n_s\times b}$. \; 
 Compute $Z = (A - WB)\Omega $, and the thin-QR factorization $QR = Z$. \;
 Orthogonalize: $Q \leftarrow (I_n-WW^\top)Q$. \;
 Compute $B' = Q^\top A - Q^\top WB$. \;
 Extend $W = \bmat{W & Q}$ and $B = \bmat{ B \\ B'}$. \;
 Update norm: $\beta \leftarrow \beta + \|Q^\top A\|_F^2$. \;
}
$\Wh \leftarrow W$.\;
\caption{Adaptive randomized range-finding algorithm \cite{gu2016efficient}}
\label{alg:rangetwo}
\end{algorithm2e}

\subsection{Computational advantages} The standard DEIM approach computes the
compact SVD of the snapshot matrix; this is expensive and costs $\mc{O}(n
n_s^2)$ flops, assuming that $n_s \leq n$. On the other hand the randomized
approach only requires $\mc{O}(rn n_s)$ flops and is computationally
advantageous when $r \ll n_s$.

Besides this, there are several benefits of this approach that are worth
pointing out. First, the rank $r$ need not be known a priori and is determined
adaptively (\cref{alg:rangetwo}). Second,  the  sketch $Y = A\Omega$ can be
computed by taking advantage of the sequential nature of the snapshot
generation. Third, the DEIM basis can be efficiently updated.  See below for
more details regarding the last two points.

\subsubsection{Reduced storage costs} In model reduction techniques involving
dynamical systems, the snapshots used to compute the DEIM basis are generated
sequentially by a time-stepping method. When a fine-scale spatial
discretization is used, the number of degrees of freedom $n$ can be large and
the cost of storing many snapshots can be overwhelmingly large and even
prohibitively. We can take advantage of the sequential nature of the snapshot
generation to reduce storage costs.

Suppose that the target rank $r$ is known in advance (for simplicity, we do not
include oversampling).  The only step that involves manipulating the snapshots
is the computation of the sketch $Y = A\Omega$. Note that, we can alternatively
express this the sum of rank-1 outer products 
$$ Y = \sum_{j=1}^{n_s}A(:,j)\Omega(j,:)   .$$ 
Here $A(:,j)$ are the columns of the snapshot matrix,
and $\Omega(j,:)$ are the rows of $\Omega$.  This formula means that once each
snapshot $A(:,j)$ is generated, the sketch can be updated appropriately using
$A(:,j)\Omega(j,:)$, and then the snapshot can be discarded. If the target rank
$r$ is much smaller than the number of snapshots $n_s$, the storage cost is
lowered to $\mc{O}(nr) $ instead of $\mc{O}(nn_s)$ and these savings may be
substantial. An alternative option is maintaining two sets of sketches  as
advocated in~\cite{tropp2017practical}.

\subsubsection{Adapting the basis} 


Adapting the basis becomes necessary in certain
applications~\cite{peherstorfer2015online}; for example, in the offline stage,
the snapshot matrix $A$ maybe constructed with the objective of making the DEIM
approximation accurate over the entire parameter range, but computational
considerations may constrain the approximation to be accurate only over a certain
region in parameter space.  The DEIM
approximation may be used as a surrogate for the original function in an
optimization setting. As the optimization routine makes progress, the DEIM
approximation may not be  accurate if the optimization path deviates from the
region of accurate DEIM approximation; in this case, a good strategy may be to
update the DEIM basis based on the optimization trajectory. 

Randomized algorithms {allow} the user the flexibility to readily update the
DEIM basis by simply updating the sketch $Y$, instead of recomputing the 
SVD. Suppose the $j$-th column of the snapshot matrix that needs to be replaced
by $a_j$; we make the assumption that the target rank $r$ remains the same.
The corresponding sketch can be replaced as $Y \leftarrow Y
+(a_j-A(:,j))\Omega$; a thin-QR can be performed to obtain the basis and
the entire snapshot matrix is not necessary for adapting the basis. It is
easily seen how to simultaneously replace a block of columns.

\subsection{Improved accuracy via subspace iteration} For some applications,
the decay in the singular values may not be rapid enough to ensure that the
resulting subspace computed $\Wh$ is accurate. The basic idea is to replace the
sketch $Y = A\Omega$ in \cref{alg:basic} with the sketch $Y = (AA^\top)^q
A\Omega$. Essentially this means, running $q$ steps of subspace iteration where
$q$ is a non-negative integer. However, it is well known that a direct computation of the
sketch $Y = (AA^\top)^q A\Omega$ is numerically unstable and is significantly
affected by round-off error. To address this issue, numerically stable methods
alternate the matrix-vector products involving $A$ with a QR factorization.
\cref{alg:subspace} gives an idealized version of the algorithm that will be
suitable for analysis in \cref{ssec:acc_subspace}. For more details on a
numerically stable implementation,
see~\cite{halko2011finding,saad2011numerical}.

\LinesNumbered
\begin{algorithm2e}[!ht]
\DontPrintSemicolon
\SetKwInput{Input}{Input}
\SetKwInput{Output}{Output}
	\Input{ Snapshot matrix $A \in \R^{n\times n_s}$, target rank $r$, oversampling parameter $p \geq 1$ such that $r + p \leq \min\{n,n_s\}$, number of iterations $q \geq 0$. }
	\Output{Basis $\Wh$ with orthonormal columns.}
	Draw a standard Gaussian matrix $\Omega \in \R^{n_s\times (r+p)}$. \;
	Form $Y = (AA^\top)^qA\Omega$ and compute thin-QR factorization $Y = QR$. \;
	Form $B = Q^\top A$. Let $W$ be the left singular vectors of $B$ and set $W_r = W(:,1:r)$. \;
	Form $\Wh \leftarrow QW_r$. \;
\caption{Idealized randomized subspace iteration for range finding. Call as: $[\Wh]$ = RandSubspace$(A,r,p,q)$}
\label{alg:subspace}
\end{algorithm2e}

\section{Error Analysis}\label{sec:error} In~\cref{ssec:accuracy}, we derive
bounds for the accuracy of the DEIM approximation, when a perturbed DEIM basis
$\Wh$ is used instead of the standard DEIM basis $W$. The bounds are applicable
whether $\Wh$ is obtained using a randomized algorithm or using any other
approximation algorithm. For example, due to the inexactness in the function
evaluations,  the snapshot matrix $A$ may be perturbed by $\Delta A$; In this
case, the left singular vectors $\Wh$ of $A + \Delta A$ can be used as the
perturbed DEIM basis. The results in this subsection are applicable to this
setting as well. In~\cref{ssec:acc_subspace}, we derive bounds for the angles
between the subspaces spanned by columns of $W$ and $\Wh$, when $\Wh$ is
obtained using the randomized range finding algorithms. We then use these
bounds to fully quantify the error in the R-DEIM approximation.

\subsection{Notation and canonical angles} \label{ssec:not} Let $W \in
\R^{n\times r}$ be the standard DEIM basis, obtained from the first $r$ left
singular vectors of $A$ and let $S \in \R^{n\times s}$ be the selection
operator. For generality, we will assume that the selection operator contains
columns from the identity matrix, but maybe scaled (see \cref{sec:point} for
examples).  The following orthogonal projectors will be of use in what follows
$$P_W \equiv  WW^\top \qquad P_S  \equiv  SS^\dagger . $$ 
We note that
\cref{eqn:obl0,eqn:obl1} still hold if $s \geq r$ and $\rank(S^\top W) = r$,
and all three relations hold if $s=r$,  and $\rank(S^\top W) = r$.

To distinguish from the standard DEIM basis, denote $\Wh \in \R^{n\times r}$ {as} the ``perturbed''
basis (obtained, for example, from \cref{alg:basic}) \changes{with orthonormal columns} and the corresponding
selection operator $\Sh \in \R^{n\times s}$; assume that $\rank(\Sh^\top \Wh) = r$. Define the corresponding
projectors 
$$P_{\Wh} \equiv \Wh\Wh^\top, \qquad P_{\Sh} = \Sh\Sh^\dagger, \qquad \Dh \equiv \Wh (\Sh^\top \Wh)^{\dagger} \Sh^\top.$$ 
We allow for the possibility that the selection operator $\Sh$ and $S$ maybe
different; these maybe computed based on the perturbed DEIM basis $\Wh$ and the standard
DEIM basis $W$ respectively.  Throughout this section, we assume that
the DEIM projectors $\D,\Dh$ do not equal either the zero matrix or the
identity matrix.

The analysis of the error in the DEIM approximation requires computing the
overlap between two subspaces of $\R^n$ which, in turn, can be described in
terms of canonical angles. We now briefly review some definitions and
properties of canonical angles\changes{; see~\cite[Chapter I.5]{stewart1990matrix}}. Denote {the} range spaces of $W$ and $\Wh$, by
$\mc{W}$ and $\mch{W}$ respectively; both these subspaces have dimension $r$.
The principal or canonical angles, between $\mc{W}$ and $\mch{W}$ are
$\theta_1, \dots, \theta_r = \theta_{\max} $ and satisfy
\[ 0 \leq \theta_1 \leq \dots \leq \theta_{\max} \leq \pi/2. \] 
The canonical angles can be computed using the SVD.  Denote the singular values
$\sigma_1 \geq \dots \geq \sigma_r$ of $W^\top \Wh$;  then the canonical
angles are $\theta_i = \arccos(\sigma_i)$ and furthermore,
\begin{equation}\label{eqn:thetamax}  \sin\theta_{\max} = \| P_W - P_{\Wh}\|_2 = \|(I_n-P_W)P_{\Wh} \|_2 = \|(I_n-P_{\Wh})P_{W} \|_2. \end{equation}
Similarly, let $\psi_{\max}$ denote the largest canonical angle between the
pair of subspaces $\range(S)$ and $\range(\Sh)$. {Equalities analogous to~\cref{eqn:thetamax} also hold for $\psi_{\max}$.}

\subsection{DEIM approximation}\label{ssec:accuracy}
We present two theorems that quantify the error in the perturbed DEIM
approximation. We use the notation that was established in \cref{ssec:not}. We
{remind the reader} that both the basis $\Wh$ and the
selection operator $\Sh$ may be different than the standard DEIM basis $W$ and
selection operator $S$ respectively.\begin{theorem}\label{thm:error1} Let $ \Dh f$ be the perturbed DEIM approximation to $\D f$ and assume that  $s \geq r$ \changes{and} $\rank(\Dh) = r$. The approximation error satisfies 
\[ \| f - \Dh f \|_2 \leq  \| \Dh \|_2 \left( \|(I_n-P_W)f\|_2 + \sin \theta_{\max} \|P_W f\|_2 \right),  \]
\end{theorem}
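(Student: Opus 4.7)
The plan is to mimic the proof of \Cref{l_deim}, but splitting the residual so that the canonical angle $\theta_{\max}$ between $\mc{W}$ and $\mch{W}$ appears naturally. I would start from
\[ f - \Dh f = (I_n - \Dh)f = (I_n - \Dh)(I_n - P_{\Wh})f, \]
using the identity $(I_n-\Dh) = (I_n-\Dh)(I_n - P_{\Wh})$, which follows from \cref{eqn:obl1} applied to $\Dh$ (valid since $s \geq r$ and $\rank(\Sh^\top \Wh) = r$). Taking norms and invoking \cref{oblique} for $\Dh$ gives
\[ \|f - \Dh f\|_2 \leq \|I_n - \Dh\|_2 \,\|(I_n - P_{\Wh})f\|_2 = \|\Dh\|_2 \,\|(I_n - P_{\Wh})f\|_2. \]

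Next, I would decompose $f = P_W f + (I_n - P_W)f$ inside the projector, yielding
\[ (I_n - P_{\Wh})f = (I_n - P_{\Wh})(I_n - P_W)f + (I_n - P_{\Wh})P_W f, \]
and bound each term separately. For the first term, $\|I_n - P_{\Wh}\|_2 \leq 1$ gives $\|(I_n - P_{\Wh})(I_n - P_W)f\|_2 \leq \|(I_n - P_W)f\|_2$. For the second term, the key observation is that $P_W f$ already lies in $\mc{W}$, so $P_W f = P_W (P_W f)$, hence
\[ \|(I_n - P_{\Wh})P_W f\|_2 = \|(I_n - P_{\Wh})P_W (P_W f)\|_2 \leq \|(I_n - P_{\Wh})P_W\|_2 \,\|P_W f\|_2. \]
By the characterization of canonical angles recalled in \cref{ssec:not}, $\|(I_n - P_{\Wh})P_W\|_2 = \sin\theta_{\max}$, which produces precisely the second summand in the stated bound.

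Combining the two pieces yields
\[ \|(I_n - P_{\Wh})f\|_2 \leq \|(I_n - P_W)f\|_2 + \sin\theta_{\max}\,\|P_W f\|_2, \]
and multiplying through by $\|\Dh\|_2$ gives the claimed inequality. The only subtlety is the second bound: a naive estimate $\|(I_n - P_{\Wh})P_W f\|_2 \leq \|P_W f\|_2$ would lose the factor $\sin\theta_{\max}$, so one must exploit that $P_W f$ is already in $\mc{W}$ before bounding the operator norm. Everything else is a routine combination of orthogonal projector properties with the oblique-projector identities \cref{eqn:obl0,eqn:obl1} and the norm equality \cref{oblique}.
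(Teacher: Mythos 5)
Your proof is correct and follows essentially the same route as the paper's: both rest on the identity $I_n-\Dh = (I_n-\Dh)(I_n-P_{\Wh})$ from \cref{eqn:obl1}, the norm equality \cref{oblique}, the split of $f$ into $P_Wf + (I_n-P_W)f$, and the characterization $\|(I_n-P_{\Wh})P_W\|_2 = \sin\theta_{\max}$. The only (immaterial) difference is the order of operations --- you factor out $\|\Dh\|_2$ first and then decompose $(I_n-P_{\Wh})f$, whereas the paper applies the triangle inequality to $(I_n-\Dh)f$ first and then invokes the oblique-projector identity on the $P_Wf$ term.
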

\begin{proof}
 From the expression $f - \Dh f =  (I_n-\Dh) (P_W + I_n-P_W)f$, and by triangle inequality 
\[\| f - \Dh f\|_2 \leq  \| (I_n-\Dh) P_W f\|_2 +  \|(I_n -\Dh)(I_n-P_{W})f\|_2. \]
 Recall  that \changes{since $s \geq r$ and $\rank(\Dh) = \rank(\Sh^\top \Wh) = r$; therefore, } $I_n -\Dh =  ( I_n - \Dh)(I_n-P_{\Wh})$. By assumption {$\Dh \neq 0$  and $\Dh \neq I_n$} and therefore, by \cref{oblique}  $\|I_n-\Dh\|_2 = \|\Dh\|_2$. Using these identities,  we get 
\[ \| f - \Dh f\|_2 \leq \|\Dh\|_2 \left(\| (I_n-P_{\Wh})P_Wf \|_2 + \|(I_n-P_{W})f\|_2 \right).\] 
Use submultiplicativity 
$$\|  (I_n-P_{\Wh})P_Wf\|_2  \leq \|(I_n-P_{\Wh})P_W\|_2 \|P_Wf\|_2 = \sin \theta_{\max} \|P_W f\|_2, $$
and plug this into the previous equation to obtain the advertised result.
\end{proof}

The first term is similar to the error in the DEIM approximation \cref{l_deim}.
The second term is the additional error introduced by using the perturbed DEIM
basis $\Wh$ and is quantified by sine of the largest canonical angle between
the subspaces $\mc{W}$ and $\mc{\Wh}$---if these subspaces are identical, and
if $S$ equals $\Sh$, then the error reduces to the standard DEIM approximation. 


It is worth comparing this error bound with that of the standard DEIM approximation \cref{l_deim}. A little bit of algebra reveals that (if $P_W f \neq 0$) 
\[ \| f - \Dh f\|_2 \leq  \kappa  \|(I_n-P_W)f\|_2 ,  \]
where $\kappa$ is an amplification factor, and 
\[ \kappa = \left( 1 + \frac{ \sin\theta_{\max} }{ \|(I_n-P_W)f\|_2/ \| P_Wf\|_2 }\right) \|\Dh\|_2.\]
An important distinction between the original DEIM result and \cref{thm:error1} is that the condition number now appears to explicitly depend on the function $f$.

A different proof technique leads to a qualitatively different bound that includes both angles $\theta_{\max}$ and $\psi_{\max}$. It requires the additional assumption that the number of selected indices $s$ equals the dimension of the DEIM basis $r$.
\begin{theorem}\label{thm:error2} Let $ \Dh f$ be the perturbed DEIM approximation to $\D f$. Assume that $s =r$ and $\rank(\D) = \rank(\Dh ) =r $.  Then, the approximation error is 
\begin{equation} \begin{split}
\| f - \Dh f\|_2 \leq  & \>  \| \D \|_2  \|(I_n-P_W)f\|_2 \\
	&  +  \|\D\|_2 \|\Dh\|_2 \left(   \sin\psi_{\max} \|(I_n-P_W)f\|_2 + \sin\theta_{\max} \|P_Sf\|_2  \right) . \end{split}  
\end{equation}
\end{theorem}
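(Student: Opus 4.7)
The plan is to split the error as $f - \Dh f = (f - \D f) + (\D - \Dh) f$ via the triangle inequality. The first summand is handled directly by \cref{l_deim}, giving $\|f - \D f\|_2 \leq \|\D\|_2 \|(I_n - P_W)f\|_2$, which is exactly the leading term of the stated bound. It remains to bound $\|(\D - \Dh)f\|_2$ by $\|\D\|_2 \|\Dh\|_2 \bigl(\sin\psi_{\max} \|(I_n-P_W)f\|_2 + \sin\theta_{\max} \|P_S f\|_2\bigr)$.

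For the cross term I would use the symmetric decomposition
\[
\D - \Dh \;=\; (I_n - \Dh)\D \;-\; \Dh(I_n - \D),
\]
which is verified by direct expansion. For the first piece, I would invoke \cref{eqn:obl1} applied to $\Dh$ to write $(I_n - \Dh) = (I_n - \Dh)(I_n - P_{\Wh})$, and then \cref{eqn:obl0} applied to $\D$ to write $\D f = P_W \D f$. Chaining these gives
\[
(I_n - \Dh)\D f = (I_n - \Dh)(I_n - P_{\Wh}) P_W \D f,
\]
and submultiplicativity together with $\|(I_n-P_{\Wh})P_W\|_2 = \sin\theta_{\max}$, $\|I_n - \Dh\|_2 = \|\Dh\|_2$ (from \cref{oblique}), and $\|\D f\|_2 \leq \|\D\|_2 \|P_S f\|_2$ (since $\D = \D P_S$) produces the target term $\|\D\|_2 \|\Dh\|_2 \sin\theta_{\max} \|P_S f\|_2$.

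The second piece is the subtler one, and this is where I expect the main obstacle. A naive bound $\|\Dh(I_n - \D)f\|_2 \leq \|\Dh\|_2 \|\D\|_2 \|(I_n - P_W)f\|_2$ would miss the $\sin\psi_{\max}$ factor; the trick is to manufacture $P_{\Sh} - P_S$ from the identity $\Dh(I_n - P_{\Sh}) = 0$ (which follows from $\Dh P_{\Sh} = \Dh$ in \cref{eqn:obl0}). Concretely, since $(I_n - P_S) = (P_{\Sh} - P_S) + (I_n - P_{\Sh})$, we get $\Dh(I_n - P_S) = \Dh(P_{\Sh} - P_S)$. Combining this with \cref{eqn:obl2} applied to $\D$, namely $(I_n - \D) = (I_n - P_S)(I_n - \D)$, yields
\[
\Dh(I_n - \D)f \;=\; \Dh(P_{\Sh} - P_S)(I_n - \D)f.
\]
Submultiplicativity together with $\|P_{\Sh} - P_S\|_2 = \sin\psi_{\max}$ and the \cref{l_deim}-style estimate $\|(I_n - \D)f\|_2 \leq \|\D\|_2 \|(I_n - P_W)f\|_2$ then delivers the remaining target term $\|\D\|_2 \|\Dh\|_2 \sin\psi_{\max} \|(I_n - P_W)f\|_2$.

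Summing the three contributions gives the stated inequality. The only nontrivial step is the insertion of $P_{\Sh} - P_S$ via the vanishing identity $\Dh(I_n - P_{\Sh}) = 0$; the remainder is bookkeeping with the four oblique-projector relations (all of which are available under the full hypothesis $s = r$ and $\rank(\D) = \rank(\Dh) = r$), \cref{oblique}, and submultiplicativity.
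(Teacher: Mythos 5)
Your proposal is correct and takes essentially the same route as the paper: the identical three-term splitting $I_n-\Dh = (I_n-\D) + (I_n-\Dh)\D - \Dh(I_n-\D)$, with the $\theta_{\max}$ term extracted via $(I_n-\Dh)(I_n-P_{\Wh})P_W\D P_S$ and the $\psi_{\max}$ term via the relations \cref{eqn:obl0,eqn:obl2} that require $s=r$. The only cosmetic difference is that you write the angle factor as $\|P_{\Sh}-P_S\|_2$ where the paper uses $\|P_{\Sh}(I_n-P_S)\|_2$; these give the same quantity $\sin\psi_{\max}$ (and indeed $\Dh(P_{\Sh}-P_S)=\Dh P_{\Sh}(I_n-P_S)$ since $\Dh P_{\Sh}=\Dh$).
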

\begin{proof}
The proof uses the decomposition
 \[ I_n-\Dh = I_n - \D + (I_n-\Dh)\D -\Dh(I_n-\D).\]
 Applying triangle inequality 
 \[ \| {(I_n- \Dh) f}\|_2 \leq \|(I_n-\D)f\|_2 +\|(I_n-\Dh)\D f\|_2 + \|\Dh(I_n-\D) f\|_2.\]
 The first term is the standard DEIM error and is bounded by $\|\D\|_2 \|(I_n-P_W)f\|_2$. For the subsequent terms, using the analogues of~\cref{eqn:obl0,eqn:obl1,eqn:obl2}, we have the equalities 
\begin{equation*}
 \begin{aligned}
(I_n-\Dh)\D = & \> (I_n-\Dh)(I_n-P_{\Wh})P_W\D P_S \\ 
\Dh(I_n-\D) = & \> \Dh P_{\Sh}(I_n-P_S)(I_n-\D)(I_n-P_W).
\end{aligned}
\end{equation*}
Therefore, with repeated application of the submultiplicativity inequality 
\[\|(I_n-\Dh)\D f\|_2 = \|(I_n-\Dh)(I_n-P_{\Wh})P_W\D P_Sf\|_2 \leq \sin \theta_{\max} \|I_n-\Dh\|_2 \|\D\|_2 \|P_Sf\|_2. \] 
The identity \cref{oblique}, along with the assumption {$\Dh\neq 0$ and $\Dh \neq I_n$}, completes the second term. The last term is obtained in a similar manner. 
\end{proof}

The interpretation of this theorem {is similar} to that of \cref{thm:error1}. If
$\range(S) = \range(\Sh)$ and $\range(W) = \range(\Wh)$, then the two trailing
terms drop out and we are left with the DEIM error \cref{l_deim}. If $\mc{W} = \mc{\Wh}$ but
$\range(S) \neq \range(\Sh)$ then $\sin\theta_{\max} = 0$. Conversely, if
$\mc{W} \neq \mc{\Wh}$ but $\range(S) = \range(\Sh)$ then $\sin\psi_{\max} =
0$.

Which bound is better? Our analysis in \cref{thm:error2}{, which uses}
the perturbation results of oblique projectors, it is likely to be sub-optimal.
Note that in the second expression, we have the multiplicative factor $\|\D\|_2
\|\Dh\|_2$; since both terms are at least $1$, this expression can be large and
clearly undesirable. {This is what we see in \cref{fig:rdeimbound}.}
\begin{figure}[!ht]
\centering
\includegraphics[scale=0.4]{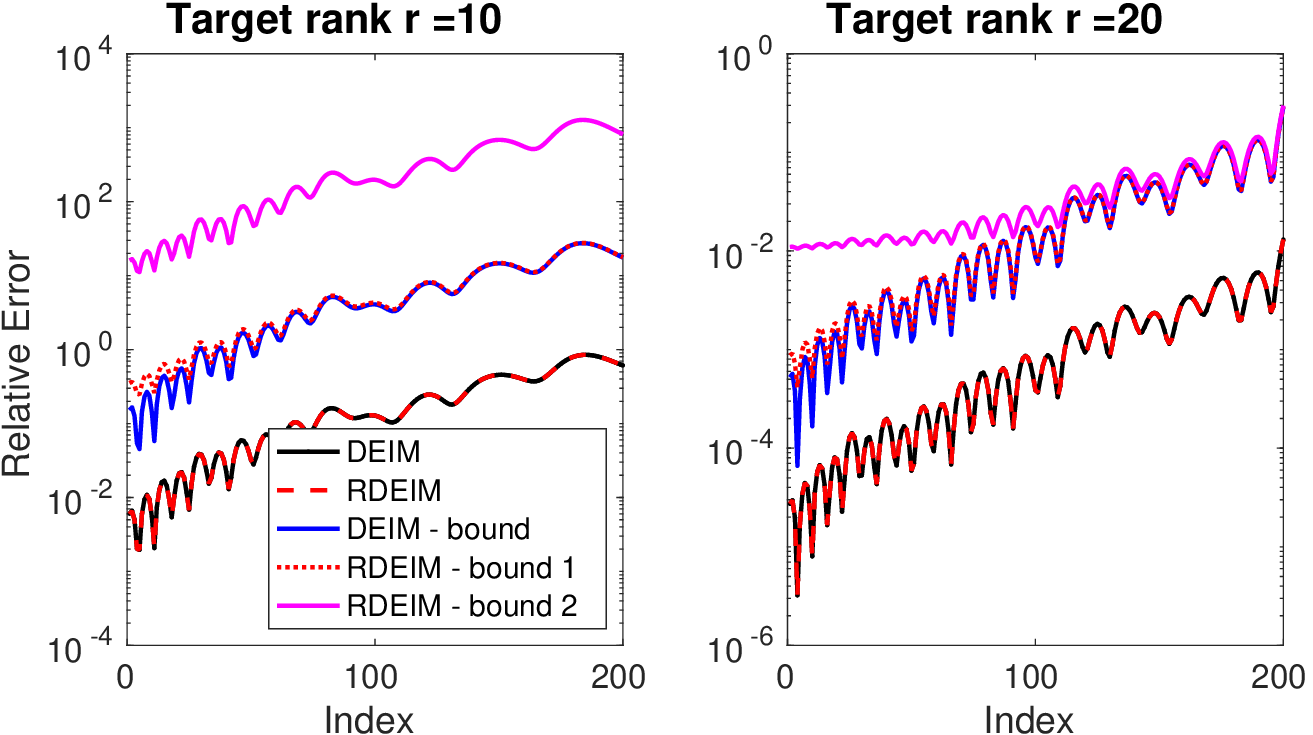}
\caption{{Comparison of the R-DEIM error and the bounds in~\cref{thm:error1} (Bound 1) and~\cref{thm:error2} (Bound 2). For comparison we also include the DEIM error and the bound \cref{l_deim}.}}
\label{fig:rdeimbound}
\end{figure}

\paragraph{Illustration of the bounds} 

This example is based on~\cite[Example 3.1]{drmac2016new}. Let 
\begin{equation}\label{e_func_ex1}
 {f}(t;\mu) = 10\exp(-\mu t) \left(\cos(4\mu t) + \sin(4\mu t)\right) \qquad 1 \leq t \leq 6, 0 \leq \mu \leq \pi. 
\end{equation}

The snapshot set is generated by taking $n_\mu = 100$ evenly spaced values of $\mu$ and
$n=10,000$ evenly spaced points in time. The thin SVD of this matrix is
computed and the left singular vectors corresponding to the first $34$ modes
are used to define $W$. We use \cref{alg:rangetwo} to obtain a randomized
basis $\Wh$ and we use an oversampling parameter $p=20$. We
consider two different target ranks $r=10,20$. To report the error we define
the vectors $f_j = \bmat{f(t_1;\mu_j)&\dots& f(t_n;\mu_j)}^\top$, for
$j=1,\dots,n_\mu$, and the relative error defined as 
\[ \text{Rel Err}(\mu_j) \> \equiv \> \frac{ \|f_j - \D f_j\|_2}{ \| f_j\|_2} \qquad j = 1,\dots,n_\mu.\] 
The results of the comparison are provided in \cref{fig:rdeimbound}. We also
plot the bounds for the DEIM approximation \cref{l_deim} and the R-DEIM
approximation\cref{thm:error1}. The point selection was done using the pivoted
QR algorithm~\cite{drmac2016new} (see \cref{ssec:det}). We see that the error
using the R-DEIM approximation closely follows the error of the DEIM algorithm.
Although both the DEIM and the R-DEIM bounds over-predict the error, we see
that the bound for R-DEIM is in close agreement with the bound for DEIM. We did
not plot the bound from \cref{thm:error2}, because we did not find it to be
very accurate.   

\subsection{Accuracy of the subspaces}\label{ssec:acc_subspace} The previous
subsection reveals that the accuracy of R-DEIM depends on the largest canonical angle
between the subspaces $\mc{W}$ and $\mch{W}$ respectively. In this subsection,
we {shed more light} into the accuracy of this quantity.

Assume that the snapshot matrix $A$, with singular vectors $W = W_1$, is
perturbed to $\Ah$.  The perturbation may be either deterministic or random.
Bounds for the canonical angles can be obtained from results known in
the literature as ``sin theta'' theorem. Below is one example of such a
theorem.
\begin{lemma}\label{lemma:angle2}
	Let $\Ah \in \R^{m \times n}$ be a perturbation of $A\in \R^{m \times n}$ such that  $\sigma_{r}(A) - \sigma_{r+1}(\Ah) > 0$ and denote the left singular vectors of $\Ah$ by $\Wh$. Then 
\[ \sin\theta_{\max} \leq \frac{\max\{\|(A - \Ah)\widehat{Z}_1\|_2, \|(A^\top - \Ah^\top)\widehat{W}_1\|\}}{\sigma_{r} - \sigma_{r+1}(\Ah)} .\]
\end{lemma}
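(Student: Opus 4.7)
The plan is to recognize this as the sin-theta theorem of Wedin applied to the pair $(A,\Ah)$ and derive it from scratch in a self-contained way. First, I would express $\sin\theta_{\max}$ in a form amenable to residual analysis. From the SVD characterization in \cref{ssec:not}, since $W_2$ has orthonormal columns and $W_2W_2^\top = I_n - P_W$,
\[ \sin\theta_{\max} = \|(I_n-P_W)\widehat{W}_1\|_2 = \|W_2^\top \widehat{W}_1\|_2. \]
Let $\widehat{\Sigma}_1, \widehat{W}_1, \widehat{Z}_1$ be the leading singular triplets of $\Ah$, so $\Ah \widehat{Z}_1 = \widehat{W}_1\widehat{\Sigma}_1$ and $\Ah^\top \widehat{W}_1 = \widehat{Z}_1\widehat{\Sigma}_1$. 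Defining residuals with respect to $A$,
\[ R_1 \equiv A\widehat{Z}_1 - \widehat{W}_1\widehat{\Sigma}_1 = (A-\Ah)\widehat{Z}_1, \qquad R_2 \equiv A^\top \widehat{W}_1 - \widehat{Z}_1\widehat{\Sigma}_1 = (A^\top - \Ah^\top)\widehat{W}_1, \]
which identifies the quantities appearing in the numerator of the stated bound.

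Next, I would left-multiply the first residual identity by $W_2^\top$ and the second by $Z_2^\top$, and use $W_2^\top A = \Sigma_2 Z_2^\top$ and $Z_2^\top A^\top = \Sigma_2^\top W_2^\top$ (which follow from $A=W_1\Sigma_1 Z_1^\top+W_2\Sigma_2 Z_2^\top$ together with the orthogonality $W_2^\top W_1 = 0$, $Z_2^\top Z_1 = 0$). Writing $\Phi = W_2^\top \widehat{W}_1$ and $\Psi = Z_2^\top \widehat{Z}_1$, I obtain the coupled Sylvester-type system
\[ \Sigma_2 \Psi - \Phi \widehat{\Sigma}_1 = W_2^\top R_1, \qquad \Sigma_2^\top \Phi - \Psi \widehat{\Sigma}_1 = Z_2^\top R_2. \]
These are the same equations that appear in Wedin's theorem. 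Note that $\|\Phi\|_2 = \sin\theta_{\max}$ is exactly what I want to bound, and the assumption $\sigma_r(A)-\sigma_{r+1}(\Ah) > 0$ provides the spectral gap needed to invert this system.

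Finally, I would eliminate $\Psi$ from the two equations to obtain a single Sylvester equation for $\Phi$ of the form $\Sigma_2^\top \Sigma_2 \Phi - \Phi \widehat{\Sigma}_1^2 = Z_2^\top R_2 \widehat{\Sigma}_1 + \Sigma_2^\top W_2^\top R_1$ (or the symmetric twin for $\Psi$). Since $\Sigma_2^\top \Sigma_2$ has eigenvalues $\sigma_j^2(A)$ for $j \geq r+1$ and $\widehat{\Sigma}_1^2$ has eigenvalues $\sigma_j^2(\Ah)$ for $j \leq r$, the gap assumption lets me bound the Sylvester operator's inverse in terms of $\sigma_r - \sigma_{r+1}(\Ah)$, yielding $\|\Phi\|_2 \leq \max\{\|R_1\|_2,\|R_2\|_2\}/(\sigma_r-\sigma_{r+1}(\Ah))$ after bounding $\|W_2^\top R_1\|_2\leq\|R_1\|_2$ and $\|Z_2^\top R_2\|_2\leq\|R_2\|_2$. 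The main obstacle is the rectangular block structure: $\Sigma_2$ is $(n-r)\times(n_s-r)$ and not square, so some care is needed when inverting the Sylvester operator. This can be handled either by padding $\Sigma_2$ with zero rows/columns to make it square or, more directly, by arguing via singular value inequalities for block-diagonal matrices so that only the extreme singular values $\sigma_r(A)$ and $\sigma_{r+1}(\Ah)$ enter the denominator.
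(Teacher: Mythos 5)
The paper's own proof of this lemma is a one-line citation to Wedin's sin-theta theorem, so your decision to derive it from scratch is a genuinely different (and more informative) route; the residual identities, the projections onto $W_2$ and $Z_2$, and the resulting coupled Sylvester system are exactly the standard skeleton of Wedin's argument. However, there is a genuine gap: the separation your setup actually delivers is not the one in the statement. Your unknown is $\Phi = W_2^\top\widehat{W}_1$, and your Sylvester operator pairs the trailing singular values of $A$ (through $\Sigma_2$) with the leading singular values of $\Ah$ (through $\widehat{\Sigma}_1$); in coupled form it is $X\mapsto HX - X\widehat{\Sigma}_1$ with $H$ the Jordan--Wielandt matrix of $\Sigma_2$, whose spectrum is $\{0\}\cup\{\pm\sigma_j(A)\}_{j>r}$. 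Its invertibility, and the bound $1/\delta$ on its inverse, therefore require $\delta = \sigma_r(\Ah) - \sigma_{r+1}(A) > 0$ --- not $\sigma_r(A)-\sigma_{r+1}(\Ah)>0$, which is the hypothesis you are given and which does not imply the former (take a perturbation for which $\sigma_r(\Ah)=\sigma_{r+1}(A)$). To land on the denominator $\sigma_r(A)-\sigma_{r+1}(\Ah)$ you would have to run the argument with the roles of $A$ and $\Ah$ interchanged, i.e., bound $\sin\theta_{\max}=\|\widehat{W}_2^\top W_1\|_2$ using the residuals $(\Ah-A)Z_1$ and $(\Ah^\top-A^\top)W_1$ projected onto $\widehat{W}_2,\widehat{Z}_2$ --- but then the numerator contains the unhatted quantities $\|(A-\Ah)Z_1\|_2$ and $\|(A^\top-\Ah^\top)W_1\|_2$ rather than the hatted ones in the statement. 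Your write-up conflates these two consistent pairings, and the sentence ``the gap assumption lets me bound the Sylvester operator's inverse in terms of $\sigma_r-\sigma_{r+1}(\Ah)$'' is exactly where the proof breaks.

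A secondary issue is that eliminating $\Psi$ is not as harmless as you suggest. Besides the dimensional slip ($\Phi$ has $m-r$ rows, so the left coefficient must be $\Sigma_2\Sigma_2^\top$, not $\Sigma_2^\top\Sigma_2$), the decoupled equation reads $\Sigma_2\Sigma_2^\top\Phi - \Phi\widehat{\Sigma}_1^2 = \Sigma_2 Z_2^\top R_2 + W_2^\top R_1\widehat{\Sigma}_1$, whose right-hand side carries a factor $\|\widehat{\Sigma}_1\|_2=\sigma_1(\Ah)$ and whose separation is the squared gap; this does not collapse to $\max\{\|R_1\|_2,\|R_2\|_2\}/\delta$. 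To get the advertised constant you must keep the coupled system and bound the inverse of the block operator directly, as Wedin and Stewart--Sun do. So the architecture of your proof is right, but the final two steps as written would not produce the stated inequality.
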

\begin{proof}This follows from~\cite[equation (4.15)]{wedin1983angles}.
\end{proof}
For some applications it may also be more convenient to bound the numerator with $\|A-\Ah\|_2$.

When $\Wh$ is computed using \cref{alg:subspace}, these results can be made more precise. 
\begin{theorem}\label{thm:expect} Let $\Wh \in \R^{n\times r}$ be the DEIM basis obtained using \cref{alg:subspace} and let $p\geq 2$. Assume that \changes{$1 \leq r < \rank(A)$} and the singular value ratio $\gamma = \sigma_{r+1}/\sigma_r < 1$, so that the subspace is well-defined. Let the constant $C$ be defined as 
\[ C \equiv \sqrt{\frac{r}{p-1}} + \frac{e\sqrt{(r+p)(n_s-r)}}{p} \,. \]
Then
\[ \mb{E}\, \sin\theta_{\max} \leq  \frac{\gamma^{2q+1} C}{1-\gamma}.\]
\end{theorem}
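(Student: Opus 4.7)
The proof splits naturally into a structural/deterministic part and a probabilistic part; the target bound factors as $\gamma^{2q+1}$ (contraction from power iteration) $\times$ $1/(1-\gamma)$ (singular value gap) $\times$ $C$ (expected norm of a Gaussian pseudoinverse), so I would organize the argument to match this factorization.

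The plan is to first decompose the sketch using the SVD $A = W_1\Sigma_1 Z_1^\top + W_2 \Sigma_2 Z_2^\top$. Since $(AA^\top)^q A = W_1 \Sigma_1^{2q+1}Z_1^\top + W_2 \Sigma_2^{2q+1}Z_2^\top$, the sketch becomes
\[ Y = W_1\Sigma_1^{2q+1}\Omega_1 + W_2\Sigma_2^{2q+1}\Omega_2, \qquad \Omega_i \equiv Z_i^\top \Omega. \]
By rotational invariance of the Gaussian distribution, $\Omega_1 \in \R^{r\times (r+p)}$ and $\Omega_2 \in \R^{(n_s-r)\times (r+p)}$ are independent standard Gaussian matrices, and $\Omega_1$ has full row rank almost surely (because $r+p > r$). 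This isolates the randomness into the single quantity $\Omega_2\Omega_1^\dagger$.

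Next, I would establish a deterministic bound of the form
\[ \sin\theta_{\max} \leq \frac{\gamma^{2q+1}}{1-\gamma}\|\Omega_2\Omega_1^\dagger\|_2 \]
by applying Wedin's $\sin\Theta$ theorem (\cref{lemma:angle2}) to the pair $A$ versus $\Ah \equiv QQ^\top A$, since $\Wh = QW_r$ is exactly the top-$r$ left singular basis of $\Ah$ by construction. For the numerator, the structure from the decomposition above combined with the standard trick of factoring $Y\Omega_1^\dagger = (W_1 + W_2 F)\Sigma_1^{2q+1}$, where $F = \Sigma_2^{2q+1}\Omega_2\Omega_1^\dagger \Sigma_1^{-(2q+1)}$, yields $\|(I - QQ^\top)A\|_2 \leq \gamma^{2q+1}\sigma_r \|\Omega_2\Omega_1^\dagger\|_2$ (the contraction comes from $\|F\|_2 \leq \gamma^{2q+1}\|\Omega_2\Omega_1^\dagger\|_2$). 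For the denominator, the interlacing inequality $\sigma_{r+1}(QQ^\top A) \leq \sigma_{r+1}(A) = \sigma_{r+1}$ (orthogonal projections cannot increase singular values) gives $\sigma_r - \sigma_{r+1}(\Ah) \geq \sigma_r - \sigma_{r+1} = \sigma_r(1-\gamma)$. Combining produces the claimed deterministic bound.

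Finally, I take expectations and invoke a standard Gaussian-matrix estimate (Halko--Martinsson--Tropp, Proposition 10.2) which states that for $p\geq 2$,
\[ \mathbb{E}\|\Omega_2\Omega_1^\dagger\|_2 \leq \sqrt{\tfrac{r}{p-1}} + \tfrac{e\sqrt{r+p}}{p}\sqrt{n_s-r} = C. \]
Substituting into the deterministic bound gives the advertised inequality. The hard part will be Stage 2: the naive bound on $\|(I-QQ^\top)A\|_2$ from Halko--Martinsson--Tropp scales like $\sigma_{r+1}\|\Omega_2\Omega_1^\dagger\|_2^{1/(2q+1)}$, which fails to capture the $\gamma^{2q+1}$ contraction. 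One must instead exploit that $\Wh$ is specifically the top-$r$ truncation of $QQ^\top A$ (not merely a basis for $\range(Q)$), and argue via the bridge subspace $\range(W_1 + W_2 F) \subseteq \range(Q)$, whose canonical angle with $\range(W_1)$ is tightly controlled by $\|F\|_2$. Routing the Wedin argument through this intermediate subspace---rather than comparing $A$ to a generic $\range(Q)$-projection---is what unlocks the sharper $\gamma^{2q+1}$ factor at the cost of the $1/(1-\gamma)$ gap term.
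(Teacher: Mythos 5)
First, a point of comparison: the paper does not prove this theorem in-line --- its stated proof is a citation to \cite[Theorem 4]{saibaba2018randomized} --- so you are reconstructing an external argument. Your skeleton matches that argument in several places: splitting $\Omega$ into $\Omega_1 = Z_1^\top\Omega$ and $\Omega_2 = Z_2^\top\Omega$, isolating the randomness in $\Omega_2\Omega_1^\dagger$, introducing $F = \Sigma_2^{2q+1}\Omega_2\Omega_1^\dagger\Sigma_1^{-(2q+1)}$ with $\|F\|_2 \le \gamma^{2q+1}\|\Omega_2\Omega_1^\dagger\|_2$, and closing with the Halko--Martinsson--Tropp Gaussian estimates, which do give exactly $\mb{E}\|\Omega_2\Omega_1^\dagger\|_2 \le \sqrt{r/(p-1)} + e\sqrt{(r+p)(n_s-r)}/p = C$. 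That probabilistic half is correct.

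The gap is in your Stage 2. The inequality $\|(I-QQ^\top)A\|_2 \le \gamma^{2q+1}\sigma_r\|\Omega_2\Omega_1^\dagger\|_2$ is false: $QQ^\top A$ has rank at most $r+p$, so the left-hand side is at least $\sigma_{r+p+1}(A)$ for every $q$, while the right-hand side tends to zero as $q\to\infty$ whenever $\gamma<1$. Hence Wedin's theorem applied to the pair $(A,\,QQ^\top A)$ with the numerator bounded by the full residual norm cannot produce the $\gamma^{2q+1}$ factor. (Even if you keep the numerator of \cref{lemma:angle2} as $\|(I-QQ^\top)A\widehat{Z}_1\|_2$ --- the other term vanishes since $\Wh\in\range(Q)$ --- bounding it by $\gamma^{2q+1}\sigma_r\|\Omega_2\Omega_1^\dagger\|_2$ would require knowing that $\widehat{Z}_1$ is already aligned with $Z_1$, which is essentially the conclusion being proved.) Your closing remark correctly identifies the bridge subspace $\range(W_1+W_2F)\subseteq\range(Q)$, whose angle to $\range(W_1)$ has tangent at most $\|F\|_2$, but ``routing Wedin through'' that subspace is not an argument: Wedin compares singular subspaces of two matrices, and the bridge subspace is not exhibited as one. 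What your sketch actually establishes is the bound for the angle between $\range(W_1)$ and the full $(r+p)$-dimensional space $\range(Q)$; the missing and genuinely hard step is the truncation, i.e., controlling $\|(I-P_{\Wh})P_QW_1\|_2$ --- how far the rank-$r$ Ritz subspace extracted from $\range(Q)$ can rotate away from $W_1$. That is where the spectral gap and the $1/(1-\gamma)$ factor actually enter in the cited proof, via a separate projection/gap argument rather than a perturbation bound on $(A,\,QQ^\top A)$.
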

\begin{proof}
See \cite[Theorem 4]{saibaba2018randomized}.
\end{proof}

The interpretation of this theorem is that the largest canonical angle
converges to $0$ as the number of subspace iterations increase. More precisely,
if we want $\sin\theta_\text{max}$ to be bounded, in expectation, by some
positive parameter $\varepsilon < 1$, then the number iterations (depending on
$\varepsilon$) should satisfy
\[ q_\varepsilon \geq \frac12 \left( \frac{\log \varepsilon (1-\gamma)/(\gamma C)}{\log \gamma}\right). \]

This result also shows that the accuracy of the R-DEIM approximation depends on
the singular value ratio $\gamma$---smaller this ratio, more accurate the subspace. When
$\gamma \approx 1$ the bound in \cref{thm:expect} is devastating, but is to be
anticipated since this means that the subspace may be poorly defined.

\subsection{Randomized DEIM basis with deterministic point
selection}\label{ssec:det} We briefly review the various choice of the
selection operator $S$ and review the error bounds associated with each choice.
We refer to the condition number $\|\D\|_2$ as the DEIM error constant. We then
show how to combine the R-DEIM basis with existing point selection techniques.

 In~\cite{chaturantabut2010nonlinear}, a greedy approach (which we call DEIM
selection algorithm) was used to determine the point selection indices. For
this algorithm, the DEIM error constant is bounded by $\mc{O}(\sqrt{n})^r$.
However, numerical experiments showed that the bound for the DEIM error
constant was pessimistic. Subsequent analysis in~\cite{sorensen2016deim}
improved this bound to $\mc{O}(\sqrt{nr}2^r)$ and constructed an explicit
matrix for which this bound could be attained asymptotically. They concluded
that while the bound was large, numerical experiments showed that the point
selection algorithm worked quite well, in practice.  Recent work
in~\cite{drmac2016new} developed a different approach which used pivoted QR
(PQR) on $W^\top$ to obtain the selection operator $S$.  As with the DEIM
selection algorithm, the error constant can be large and could be attained by
specially constructed adversarial cases. Numerical experiments suggest that the
performance is comparable to the DEIM selection algorithm and is often better.
More recently, the authors~\cite{drmac2017discrete} used the Gu-Eisenstat
strong rank-revealing QR (sRRQR) algorithm~\cite{gu1996efficient} to obtain the bound
\[ \|\D\|_2 \leq   \sqrt{1 + \eta^2r(n-r)} \equiv D_\text{sRRQR},\] 
where $\eta \geq 1$ is
a user-specified parameter (the authors in~\cite{gu1996efficient} call this
parameter $f$). The DEIM error constant using sRRQR is significantly lower than
the DEIM selection algorithm or PQR algorithm{; see \cref{tab:summary} for the corresponding DEIM error constants.}  The cost of sRRQR is roughly \changes{$\mc{O}(nr^2)$ ignoring logarithmic factors~\cite{gu1996efficient}. }
In
numerical experiments, the performance of sRRQR is similar to that of PQR;
therefore, we use the latter in our experiments because of its lower
computational cost. 

\LinesNumbered
\begin{algorithm2e}[!ht]
\DontPrintSemicolon
\SetKwInput{Input}{Input}
\SetKwInput{Output}{Output}
	\Input{ Snapshot matrix $A \in \R^{n\times n_s}$, target rank $r$,  oversampling parameter $p \geq 1$, number of iterations $q \geq 0$.  User-defined parameter $\eta \geq 1$. }
	\Output{Basis $\Wh \in \R^{n\times r}$ with orthonormal columns and selection operator $\Sh \in \R^{n\times r}$ defining the R-DEIM projector $\Dh = \Wh (\Sh^\top W)^\dagger \Sh^\top$.}
	Construct $\Wh$ as $[\Wh]$ = RandSubspace$(A,r,p,q)$.\;
	Apply sRRQR algorithm with parameter $\eta \geq 1$ to obtain 
 \[ \Wh^\top \bmat{\Pi_1  & \Pi_2} = Q \bmat{R_{11} & R_{12}}.\] 
Set $\Sh = \Pi_1 \changes{\in \R^{n\times r}}$. \;
\caption{Randomized DEIM approximation with sRRQR point selection.}
\label{alg:randsrrqr}
\end{algorithm2e}

Suppose the DEIM basis $\Wh$ is generated using the randomized algorithm
\cref{alg:subspace}. We can apply sRRQR to $\Wh^\top$ to obtain
\[ \Wh^\top \bmat{\Pi_1 & \Pi_2} = Q\bmat{R_{11} & R_{12}}.\]
Here $\bmat{\Pi_1 & \Pi_2} \in \R^{n\times n}$ is a permutation matrix, $Q \in
\R^{r\times r}$ is orthogonal, and $R_{11} \in \R^{r\times r}$ is upper triangular with positive
diagonals. The selection operator $S$ is taken to be $\Pi_1 \in \R^{n\times r}$, which selects
well conditioned rows of $\Wh$. This is summarized in \cref{alg:randsrrqr}. The
following theorem analyzes the error in the resulting approximation. 
\begin{theorem}
Let $\Wh$ and \changes{$\Sh$} be the outputs of \cref{alg:randsrrqr} and define the
R-DEIM approximation $$\rdeim = \Wh(\Sh^\top \Wh)^\dagger \Sh^\top f.$$ With the
assumptions and notation of \cref{thm:expect}, the expected error in the R-DEIM
approximation  satisfies   
\[ \mb{E}_\Omega \| f - \rdeim \|_2 \leq D_\text{sRRQR} \left( \|(I_n-P_W)f\|_2 + \frac{\gamma^{2q+1} C}{1-\gamma}\|P_Wf\|_2 \right). \]
\end{theorem}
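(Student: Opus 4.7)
The plan is to combine three results already established in the paper: (i) the perturbation bound for the DEIM approximation when a non-standard basis is used (\cref{thm:error1}), (ii) the deterministic bound $\|\Dh\|_2 \leq D_\text{sRRQR}$ that the strong rank-revealing QR step of \cref{alg:randsrrqr} guarantees on any input matrix with orthonormal columns, and (iii) the expected subspace-angle bound of \cref{thm:expect}. Since the first two results are entirely deterministic and hold for every realization of the Gaussian matrix $\Omega$, only the last step requires taking expectations.

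First, I would verify that \cref{thm:error1} applies: the output $\Wh$ of \cref{alg:subspace} has orthonormal columns, the selection operator $\Sh=\Pi_1$ produced by sRRQR picks $s=r$ rows, and the sRRQR guarantee $\normtwo{(\Sh^\top\Wh)^\dagger} \leq \sqrt{1+\eta^2 r(n-r)} = D_\text{sRRQR}$ in particular ensures $\rank(\Sh^\top\Wh)=r$, so $\rank(\Dh)=r$ and $\Dh\neq 0,I_n$ generically. Thus, for every realization of $\Omega$,
\[ \| f - \Dh f \|_2 \leq \|\Dh\|_2\left( \|(I_n-P_W)f\|_2 + \sin\theta_{\max}\|P_W f\|_2\right), \]
where $\theta_{\max}$ is the largest canonical angle between $\range(W)$ and $\range(\Wh)$.

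Next I would replace $\|\Dh\|_2$ by its deterministic upper bound. Since $\Wh$ has orthonormal columns, $\|\Dh\|_2 = \normtwo{(\Sh^\top\Wh)^\dagger}$, and the sRRQR guarantee gives $\|\Dh\|_2 \leq D_\text{sRRQR}$ pathwise in $\Omega$. Substituting yields, for each $\Omega$,
\[ \|f-\Dh f\|_2 \leq D_\text{sRRQR}\left( \|(I_n-P_W)f\|_2 + \sin\theta_{\max}\|P_W f\|_2\right). \]
Only the term $\sin\theta_{\max}$ depends on $\Omega$; the two projected norms $\|(I_n-P_W)f\|_2$ and $\|P_W f\|_2$ are deterministic, as is $D_\text{sRRQR}$.

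Finally, I would take the expectation over $\Omega$ and invoke linearity together with \cref{thm:expect}, which gives $\mb{E}_\Omega\,\sin\theta_{\max} \leq \gamma^{2q+1}C/(1-\gamma)$ under the stated hypotheses on $r$, $p$, and $\gamma$. This directly produces the advertised bound. The only subtlety — and the one point I would double-check rather than call a real obstacle — is the interchange of the sRRQR bound with the expectation: because $D_\text{sRRQR}$ does not depend on the particular $\Wh$ produced (it only uses $n$, $r$, $\eta$), factoring it out of the expectation is immediate, and no additional concentration or conditioning argument is needed.
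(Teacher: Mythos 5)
Your proposal is correct and matches the paper's own argument essentially step for step: both invoke the sRRQR bound $\|\Dh\|_2=\normtwo{(\Sh^\top\Wh)^\dagger}\leq D_\text{sRRQR}$ (which also guarantees $\rank(\Dh)=r$ so that \cref{thm:error1} applies), obtain the pathwise bound, and then take expectations using \cref{thm:expect}. Your additional remark that only $\sin\theta_{\max}$ depends on $\Omega$, so the expectation passes through trivially, is a slightly more careful phrasing of what the paper leaves implicit, but it is not a different proof.
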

\begin{proof}
By~\cite[Lemma 2.1]{drmac2017discrete} we obtain $\|\Dh\|_2 = \| (\Sh^\top
\Wh)^{-1}\|_2 \leq D_\text{sRRQR}$. This ensures that $\rank(\Dh) =
\rank(\Sh^\top \Wh) = r$\changes{. The assumption $1 \leq r < \rank(A)$ ensures $\Dh \neq 0$ or $\Dh \neq I_n $;} therefore, we can apply \cref{thm:error1} to obtain
\[ \|f-\rdeim\|_2 \leq D_\text{sRRQR} \left( \|(I-P_W)f\|_2 + \sin\theta_{\max} \|P_W f\|_2 \right).\] 
We apply the result from \cref{thm:expect}
to the above equation, which completes the proof. 
\end{proof}

\section{Randomized Point selection}\label{sec:point} To our knowledge, the
best known bounds for the DEIM error constant are obtained using the sRRQR
algorithm; however, as mentioned earlier, the computational cost of the sRRQR
algorithm can be high and maybe prohibitively expensive for applications of
interest. To tackle this computational challenge, sampling based randomized
approaches have been previously
proposed~\cite{peherstorfer2015online,drmac2016new,peherstorfer2018stabilizing}. 

The sampling approach for point selection, randomly samples $s\geq r$ indices
from the index set $\{1,\dots,n\}$, according to a pre-specified discrete
probability distribution, to determine the selection operator $S$. The
computational cost of point selection by sampling is $\mc{O}(ns)$; in
comparison, both the DEIM selection and PQR algorithms cost $\mc{O}(nr^2)$.
Sampling based techniques is that they are readily parallelizable and
therefore, advantageous for large-scale problems. There are two competing
issues to consider when deciding between sampling strategies: the DEIM error
constant, and the number of required samples.  A low DEIM error constant is
desirable but may require many samples $s$, which increases the computational
cost.  An example of sampling strategy includes uniform sampling, either with
or without replacement. When the DEIM basis $W$ has high coherence (for a definition, see
\cref{eqn:leverage} and the discussion below it), the number of samples
required $s$ can be large to ensure a small DEIM error constant. Therefore, we
do not use uniform sampling for the point selection. See \cite{ipsen2014effect}
for additional discussion on the effect of coherence on sampling from matrices
with orthonormal columns.  In this section, we propose randomized algorithms
for the selection operator $S$ and develop bounds for the proposed selection
operators.

We propose two different randomized point selection techniques. The first
method is based on leverage scores (see \cref{eqn:leverage} for a definition). 
While the point selection stage is computationally efficient, the overall cost can be high since 
$s \sim \mc{O}(r\log r)$ samples need to be drawn---this also corresponds to
the number of points at which the nonlinear function $f$ is evaluated. In
certain applications, it is desirable to pick only $r$ samples. To address this
issue, we propose a hybrid point selection algorithm which retains the
computational advantages of sampling-based point selection, but only samples 
$r$ indices, thereby retaining the favorable properties of deterministic
methods.

\subsection{Randomized point selection with standard DEIM basis} We first
develop algorithms for randomized point selection with the standard DEIM basis
$W\in \R^{n\times r}$. The analysis can be extended to the randomized DEIM
basis and is considered in the next subsection.

The {\em leverage scores} of $W$ are defined to be
\begin{equation}\label{eqn:leverage} 
\ell_j \> \define \> \normtwo{\vec{e}_j^\top\mat{W}}^2, \qquad j=1,\dots,n,
\end{equation} 
where $\vec{e}_j$ is the $j$-th column of an $n\times n$ identity matrix.
Equivalently, the leverage scores are the squared row norms of $W$, or  
alternatively, the diagonals of the projector $P_W = WW^\top$. The largest
leverage score is known as the {\em coherence} and the sum of the leverage scores
satisfies $\sum_{j=1}^n \ell_j = r$. Based on the leverage scores, we can
define the following discrete probability mass function (pmf) $\pi_j = \ell_j /r$ for
$j=1,\dots,n$. In what follows, we instead use the related pmf 
\begin{equation}
\label{eqn:prob}
\pi_j^\beta =   \frac{\beta\ell_j}{r} + \frac{(1-\beta)}{n}, \qquad j=1,\dots,n. 
\end{equation}
Here $ 0 < \beta < 1$ is a user-defined constant, and the modified pmf is a convex
combination of the  leverage score pmf  and the uniform pmf. This modified pmf
is beneficial since it can handle rows with zero leverage scores.

\paragraph{Leverage score point selection approach} The leverage score approach
constructs a sampling matrix by selecting indices $\{t_1,\dots,t_s\}$,
independently and with replacement, from the index set $\{1,\dots,n\}$, with
probabilities given by \cref{eqn:prob}. We construct the selection operator $S$
as follows: the $j$-th column of $S$ is $e_{t_j}/\sqrt{s\pi_{t_j}^\beta}$, for
$j=1,\dots,s$. This is summarized in \cref{alg:lev}. These columns are scaled
in this manner to ensure that $SS^\top$ equals the identity matrix $I_n$ in
expectation; that is, $SS^\top$ is an unbiased estimator of the identity
matrix. 
\begin{lemma} 
Let $W\in \R^{n\times r}$ be a matrix with orthonormal columns. Let the selection operator be constructed as described in \cref{alg:lev}. Then $\mb{E}[SS^\top] = I_n$.
\end{lemma}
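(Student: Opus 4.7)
The plan is to expand $SS^\top$ as a sum of rank-one matrices indexed by the samples $t_1, \dots, t_s$, and then compute the expectation termwise using linearity and the fact that the samples are i.i.d.\ from the pmf $\{\pi_k^\beta\}_{k=1}^n$.

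First I would write out the selection operator explicitly. Since the $j$-th column of $S$ is $e_{t_j}/\sqrt{s\pi_{t_j}^\beta}$, we have
\[
SS^\top \> = \> \sum_{j=1}^s \frac{1}{s\pi_{t_j}^\beta}\, e_{t_j}e_{t_j}^\top.
\]
By linearity of expectation,
\[
\mathbb{E}[SS^\top] \> = \> \sum_{j=1}^s \mathbb{E}\!\left[\frac{1}{s\pi_{t_j}^\beta}\, e_{t_j}e_{t_j}^\top\right].
\]
Because the indices are drawn i.i.d.\ according to the pmf \cref{eqn:prob}, each summand equals
\[
\frac{1}{s}\sum_{k=1}^n \pi_k^\beta \cdot \frac{1}{\pi_k^\beta}\, e_k e_k^\top \> = \> \frac{1}{s}\sum_{k=1}^n e_k e_k^\top \> = \> \frac{1}{s}\, I_n,
\]
where the cancellation of $\pi_k^\beta$ is well-defined because $\pi_k^\beta \geq (1-\beta)/n > 0$ for all $k$ (this is precisely the role of mixing in the uniform distribution). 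Summing over $j=1,\dots,s$ yields $\mathbb{E}[SS^\top] = I_n$, as claimed.

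The argument is essentially mechanical once the scaling $1/\sqrt{s\pi_{t_j}^\beta}$ is in place; the only ``obstacle'' worth flagging is ensuring strict positivity of the sampling probabilities so that dividing by $\pi_{t_j}^\beta$ is legitimate, and this is guaranteed by the convex combination with the uniform pmf in \cref{eqn:prob}. Notice that the proof never uses orthonormality of $W$; the unbiasedness is purely a property of the importance-sampling weights.
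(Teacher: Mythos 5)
Your proof is correct and follows exactly the same route as the paper's: expand $SS^\top$ as a sum of scaled rank-one outer products and take the expectation of each term, which the paper states as ``easy to verify'' and you simply write out in full. Your added remarks on the strict positivity of $\pi_k^\beta$ and the irrelevance of the orthonormality of $W$ are accurate but do not change the argument.
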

\begin{proof}
We write $SS^\top$ as the outer product representation
\[ SS^\top = \sum_{j=1}^{s} Y_j \qquad Y_j \equiv \frac{1}{s\pi_{t_j}^\beta}e_{t_j}e_{t_j}^\top. \]
It is easy to verify that $\mb{E}[Y_j] = I_n/s$. By the linearity of expectations, it follows that $\mb{E}[SS^\top] = I_n.$
\end{proof}

The number of selected indices $s$ is chosen to be
\begin{equation}
\label{eqn:cls} \cls \equiv  \Bigl\lceil\frac{2r}{\beta\epsilon^2} \log (r/\delta) \Bigr\rceil,
\end{equation}
where $\lceil \cdot \rceil$ is the ceiling function. 
This choice will be justified in \cref{thm:sampling}. 

\LinesNumbered
\begin{algorithm2e}[!ht]
\DontPrintSemicolon
\SetKwInput{Input}{Input}
\SetKwInput{Output}{Output}
\Input{ Matrix $W \in \R^{n\times r}$ with orthonormal columns,  number of samples $s\geq r$. Probabilities $\{\pi_j^\beta\}_{j=1}^n$ defined in \eqref{eqn:prob}.}
\Output{Matrix $S \in \R^{n\times \cls}$}
	\For {$j=1,\dots,s$} {
		Select index $t_j$, independently and with replacement, from  $\{1,\dots,n\}$ with probabilities $\{\pi_j^\beta\}_{j=1}^n$.\; 
		Set $S(:,j) = \frac{1}{\sqrt{s\pi_{t_j}^\beta}} e_{t_j}$. \;
	}
\caption{Leverage score point selection. Call as: $[S]$ =  LeverageScorePS$(W,\{\pi_j^\beta\}_{j=1}^n,s)$}
\label{alg:lev}
\end{algorithm2e}

 \paragraph{Hybrid point selection approach} The hybrid approach we propose has
two stages: a randomized point selection stage, which uses the leverage score
distribution to select $\cls$ indices, and a deterministic approach, which uses
the sRRQR algorithm to choose $r$ indices out of $\cls$. 
\begin{description}
\item[1. Randomized stage] In the first stage, we use leverage score approach
(\cref{alg:lev}) with $s =\cls$ and denote the point selection matrix $S_1 \in
\R^{n\times \cls}$. The resulting matrix $S_1^\top W$ extracts $\cls$ rows from
$W$, with appropriate scaling.  
\item [2. Deterministic stage] In the second
stage, we apply sRRQR to $W^\top S_1$ to select exactly $r$ rows from the
matrix $S_1^\top W$. Let $S_2 \in \R^{\cls \times r}$ denote the point
selection matrix obtained using sRRQR.  
\end{description}
  Denote this
composite selection matrix as $S = S_1S_2 \in \R^{n\times r}$ which consists of
columns from $n\times n$ identity matrix that are scaled by the appropriate
factors. Similar algorithms have been proposed in
\cite{boutsidis2009improved,broadbent2010subset}. However, the specific choice
of the sampling distribution and the subsequent analysis are different.

\LinesNumbered
\begin{algorithm2e}[!ht]
\DontPrintSemicolon
\SetKwInput{Input}{Input}
\SetKwInput{Output}{Output}
\Input{ Matrix $W \in \R^{n\times r}$ with orthonormal columns, number of samples $\cls$, parameter $\eta \geq 1$. \\ Probabilities $\{\pi_j^\beta\}_{j=1}^n$ defined in \eqref{eqn:prob}. }
\Output{ Matrices $S_1 \in \R^{n\times \cls}$ and $S_2 \in \R^{\cls \times r}$ that define $S= S_1S_2$.}
 \tcc{Stage 1. Randomized Stage } 
 [$S_1$]  = LeverageScorePS$(W,\{\pi_j^\beta\}_{j=1}^n,\cls)$. \; 
  \tcc{Stage 2. Deterministic Stage}  
 Perform sRRQR with parameter $\eta$ on $W^\top S_1$. $ W^\top S_1 \bmat{\Pi_1 & \Pi_2} = Q \bmat{R_{11} & R_{12}}$ . \; 
 Set $S_2 = \Pi_1$.\;
\caption{Hybrid Point selection.}
\label{alg:hyb}
\end{algorithm2e}
In the analysis of the DEIM approximation \cref{l_deim}, the condition number is determined by the DEIM error constant $\|\D\|_2$. We derive bounds for this constant when the selection operator is obtained using the leverage score and the hybrid approaches.	
\begin{theorem}\label{thm:sampling} Let $W \in \R^{n\times r}$ be a fixed matrix with orthonormal columns and let $0 < \epsilon, \delta < 1$ be user-defined parameters. Let $S_1 \in \R^{n\times \cls}$ and $S = S_1S_2\in \R^{n\times r}$ be the outputs of~\cref{alg:lev,alg:hyb}, with the number of samples $s= \cls$, and define the corresponding DEIM operators $ \D_{\text{LS}} \equiv W(S_1^\top W)^\dagger S_1^\top$ and $\D_\text{Hy} \equiv W(S^\top W)^\dagger S^\top$.  With probability at least $1-\delta$, the DEIM error constant for
\begin{itemize} 
\item  the leverage score approach satisfies 
\[ \| \D_{\text{LS}}\|_2  \leq  \sqrt{\frac{n/\cls}{(1-\beta)(1-\epsilon)}} \equiv  D_\text{LS} .\]
\item  the hybrid approach satisfies 
\[ \| \D_{\text{Hy}}\|_2  \leq  D_\text{LS}\sqrt{1+\eta^2r(\cls-r) } \equiv  D_\text{Hy}.\]
\end{itemize}
\end{theorem}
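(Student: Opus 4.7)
My plan is to combine matrix concentration of $W^\top S_1 S_1^\top W$ about $I_r$ with a deterministic bound on $\|S_1\|_2$ enforced by the lower bound $\pi_j^\beta \ge (1-\beta)/n$ built into the sampling distribution \eqref{eqn:prob}. Since $\|W\|_2=1$, the two ingredients fit together through
\[ \|\D_{\text{LS}}\|_2 \;=\; \|W(S_1^\top W)^\dagger S_1^\top\|_2 \;\le\; \|(S_1^\top W)^\dagger\|_2\,\|S_1\|_2. \]
The hybrid bound will follow by feeding the leverage-score estimate into the sRRQR amplification guarantee applied to the smaller matrix $W^\top S_1$.

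To control $\|(S_1^\top W)^\dagger\|_2$, I would write $W^\top S_1 S_1^\top W = \sum_{j=1}^{\cls} Y_j$, where $Y_j \equiv \tfrac{1}{\cls\pi_{t_j}^\beta}\,W^\top e_{t_j} e_{t_j}^\top W$ is an independent, symmetric, positive semidefinite rank-one random matrix. Averaging over the sampling distribution gives $\mb{E}[Y_j] = I_r/\cls$, so $\mb{E}[\sum_j Y_j] = I_r$. The leverage-dominant part $\pi_j^\beta \ge \beta\ell_j/r$ of \eqref{eqn:prob} yields the per-summand bound $\|Y_j\|_2 \le \ell_{t_j}/(\cls\pi_{t_j}^\beta) \le r/(\cls\beta)$. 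A matrix Chernoff lower-tail inequality then implies that the choice $\cls \ge 2r\log(r/\delta)/(\beta\epsilon^2)$ is exactly what is needed for $\lambda_{\min}(W^\top S_1 S_1^\top W) \ge 1-\epsilon$ with probability at least $1-\delta$, giving $\|(S_1^\top W)^\dagger\|_2 \le (1-\epsilon)^{-1/2}$. For the factor $\|S_1\|_2$, the uniform-dominant part $\pi_j^\beta \ge (1-\beta)/n$ yields a column-norm bound $1/\sqrt{\cls\pi_{t_j}^\beta} \le \sqrt{n/((1-\beta)\cls)}$, which together with the effective orthogonality of the sampled scaled unit vectors promotes to $\|S_1\|_2 \le \sqrt{n/((1-\beta)\cls)}$. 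Multiplying the two factors delivers $D_\text{LS}$.

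For the hybrid bound, Stage~1 outputs the same $S_1$, so $\|(W^\top S_1)^\dagger\|_2 \le (1-\epsilon)^{-1/2}$ on the same $1-\delta$ event. Stage~2 applies sRRQR with parameter $\eta$ to $W^\top S_1 \in \R^{r\times\cls}$, returning a column permutation $\Pi_1$ selecting $r$ well-conditioned columns, so $S = S_1\Pi_1$ and $S^\top W = R_{11}^\top Q^\top$ for the triangular factor in the sRRQR. The Gu--Eisenstat guarantee~\cite{gu1996efficient} (see also~\cite[Lemma~2.1]{drmac2017discrete}) gives $\|R_{11}^{-1}\|_2 \le \sqrt{1+\eta^2 r(\cls-r)}\,\|(W^\top S_1)^\dagger\|_2$, hence $\|(S^\top W)^{-1}\|_2 \le \sqrt{1+\eta^2 r(\cls-r)}/\sqrt{1-\epsilon}$. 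Combining with $\|S\|_2 \le \|S_1\|_2 \le \sqrt{n/((1-\beta)\cls)}$ and $\|\D_\text{Hyb}\|_2 \le \|(S^\top W)^{-1}\|_2\,\|S\|_2$ reproduces $D_\text{Hyb} = D_\text{LS}\sqrt{1+\eta^2 r(\cls-r)}$.

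The main obstacle is the careful matrix Chernoff setup—verifying $\mb{E}[Y_j] = I_r/\cls$ in closed form and, more importantly, the per-summand norm bound $\|Y_j\|_2 \le r/(\cls\beta)$, since it is the latter that dictates the sample complexity $\cls$ and exposes why the mixing parameter $\beta$ in \eqref{eqn:prob} is indispensable (pure leverage-score probabilities would fail rows with zero leverage). A secondary subtlety in the hybrid step is that $S_1\Pi_1$ does not have orthonormal columns, so the sRRQR amplification must be applied to $W^\top S_1$ rather than to $W^\top$ directly, and the resulting factor has to be composed correctly with the Stage~1 bounds on both $\|S_1\|_2$ and $\sigma_{\min}(W^\top S_1)$ to obtain the claimed constant.
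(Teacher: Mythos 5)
Your proposal is correct and follows essentially the same route as the paper: the same factorization $\|\D_\text{LS}\|_2 \le \|(S_1^\top W)^\dagger\|_2\,\|S_1\|_2$, the same use of the two lower bounds on $\pi_j^\beta$ (the leverage part to control $\sigma_r(S_1^\top W)$, the uniform part to control $\|S_1\|_2$), and the same sRRQR amplification applied to $W^\top S_1$ for the hybrid case. The only difference is cosmetic: you rederive the bound $\sigma_r^2(S_1^\top W)\ge 1-\epsilon$ via an explicit matrix Chernoff argument, whereas the paper invokes \cite[Theorem 6.4]{holodnak2015randomized}, which packages exactly that computation.
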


For both the leverage score and the hybrid point selection approaches, the point selection operator $S$ contains (appropriately scaled) columns from the identity matrix. A few differences between the standard DEIM approach are worth pointing out \changes{(assume that $\D \neq 0$ and $\D \neq I_n$)}: 
\begin{enumerate}
\item The matrix $S$ no longer has orthonormal columns; therefore, 
\[ \|\D\|_2 = \| I_n - \D\|_2 = \| (S^\top W)^\dagger S^\top\|_2.\]
The first equality holds because $\D$ is an oblique projector, see \cref{oblique}. 
\item Second, the DEIM implementation has to be altered appropriately. There are two steps: first, the components of $f(\cdot)$ as determined by $S$ are extracted, and second, these components are scaled by the corresponding scaling factor. 
\item We can combine \cref{thm:sampling} along with \cref{l_deim} to derive the error in the DEIM approximation. With the assumption and notation of \cref{thm:sampling}, the following bounds hold with probability at least $1-\delta$
\[ \|f-\D_\text{LS}f\|_2 \leq D_\text{LS} \|(I_n-P_W)f\|_2 \qquad \|f-\D_\text{Hy}f\|_2 \leq D_\text{Hy} \|(I_n-P_W)f\|_2.\]
This is obtained by combining \cref{thm:sampling} with \cref{l_deim}.
\end{enumerate}

\begin{proof}[\cref{thm:sampling}]
The DEIM operators satisfy the inequalities
\[ \|\D_\text{LS} \|_2 = \|W (S_1^\top W)^{\dagger} S_1^\top \|_2 \leq \| (S_1^\top W)^{\dagger} \|_2 \|S_1\|_2\]
and
\[  \|\D_\text{Hy} \|_2 = \|W (S^\top W)^{\dagger} S^\top \|_2 \leq \| (S^\top W)^{\dagger} \|_2 \|S\|_2.\]

\paragraph{Leverage scores approach}{We have to find upper bounds for $\|S_1\|_2$ and $\| (S_1^\top W)^{\dagger} \|_2$. To bound $\|(W^\top S)^\dagger\|_2$, we first observe that 
\[ \pi_j^\beta \geq \frac{\beta\ell_j}{r}.\]
If we take the number of columns of $S_1$ taken to be $ s =\cls$, the operator $S_1$ satisfies the conditions of \cite[Theorem 6.2]{holodnak2015randomized}. It follows from this theorem that with probability at least $1-\delta$
\begin{equation}\label{eqn:s1w}  1 - \epsilon \leq \sigma_r^2(S_1^\top W) \qquad \text{or} \quad \|(S_1^\top W)^\dagger\|_2 \leq \frac{1}{\sqrt{1-\epsilon}}. \end{equation}
 
We now bound $\|S_1\|_2$. If $\beta = 1$, that is the pmf only contains contributions from the leverage scores. The norm $\|S_1\|_2$ may be unbounded if zero leverage scores are encountered. However, if $\beta \neq 0$, since $\pi_j^\beta \geq \frac{1-\beta}{n}$
\begin{equation} \label{eqn:s1}\|S_1\|_2 \leq \max_{1 \leq j \leq n} \sqrt{\frac{1}{\cls\pi_j^\beta}} \leq \sqrt{\frac{n}{\cls(1-\beta)}} .\end{equation}
Putting this together, we get 
\[ \|\D_\text{LS} \|_2 \leq \| (S_1^\top W)^{\dagger} \|_2 \|S_1\|_2 \leq \sqrt{\frac{n/\cls}{(1-\epsilon)(1-\beta)}}.\]
\paragraph{Hybrid approach} Similar to the previous part of the proof, we have to bound $\| (S^\top W)^{\dagger} \|_2$  and $\|S\|_2$. Applying sRRQR to $(W^\top S_1)$ gives 
\[  (W^\top S_1) \bmat{\Pi_1 & \Pi_2} = Q\bmat{R_{11} & R_{12}}, \]
where $Q \in \R^{r \times r}$ is an orthogonal matrix; $R_{11} \in \R^{r\times
r}$ is upper triangular; and  $\bmat{\Pi_1 &\Pi_2}  \in
\R^{C_\text{LS} \times \cls}$ is a
permutation matrix with $\Pi_1 \in \R^{\cls\times r}$. Recall that $S_2 = \Pi_1$ and that $S = S_1 S_2$. Then, the singular value 
bounds~\cite[Lemma 3.1]{gu1996efficient} ensure that 
\[ \frac{\sigma_r(S_2^\top W)}{\sqrt{1+\eta^2 r(\cls-r)} }  \leq \sigma_r(S^\top W).\] 
Therefore, 
\[ \|(S^\top W)^{-1}\|_2 \
	\leq \sqrt{1+\eta^2 r(\cls-r)} 
	\| (S_1^\top W)^\dagger \|_2 \leq \sqrt{\frac{1+\eta^2 r(\cls-r) }{1-\epsilon}}. \] 
The bound for $\| (S_1^\top W)^\dagger \|_2$ follows from \cref{eqn:s1w}. To finish the proof, it remains to bound $\|S\|_2  \leq \|S_1\|_2 \|S_2\|_2 $. Since $S_2$ contains columns from the identity matrix, $\|S_2\|_2 = 1$ and we have an upper bound for $\|S_1\|_2$ in \cref{eqn:s1}. Combining all the intermediate steps, the bound for $\D_\text{Hy}$ then follows readily.}
\end{proof}

To shed light on the DEIM error constants, we give some representative values. Suppose $\beta = 1/2$, $\eta = 2$ and $\epsilon = 9/10$, the number of samples required are $$\cls \leq 5r\log (r/\delta)$$ and 
\[ D_\text{LS} = \sqrt{\frac{20n}{\cls}} \qquad D_\text{Hy} \leq  D_\text{LS} \sqrt{1 + 20 r^2{\log (r/\delta)}}.\]

In terms of asymptotic complexity, the DEIM error constant for the leverage
score algorithm is $\mc{O}(\sqrt{n/\cls})$ whereas for the hybrid algorithm, it is
$\mc{O}(\sqrt{nr})$. This is to be compared with the sRRQR algorithm for which
the DEIM error constant is $\mc{O}(\sqrt{nr})$. In terms of computational
costs, the cost of computing and sampling  the leverage scores is $\mc{O}(nr)$
with an additional $\mc{O}(r^2\cls)$ for factorizing $S^\top W$.
The hybrid algorithm also requires $\mc{O}(nr)$ for computing the leverage
scores and sampling. In the second stage, sRRQR is applied to a matrix of size
$r\times \cls$; this cost is $\mc{O}(r^2\cls)$, and is
independent of $n$. An additional cost  for factorizing $S^\top W$ is
$\mc{O}(r^3)$. This is summarized in \cref{tab:summary}. In summary, the hybrid
approach is both computationally efficient compared to other point selection
methods and has comparable DEIM error constants.

\begin{table}[!ht]\centering
\caption{Summary of various point selection techniques. Here $0 < \epsilon, \delta < 1 $ are user-defined parameters. We take the parameter $\beta = 1/2$. For the computational cost, terms that do not depend on $n$ are not considered. A note about the ${}^*$ entries: the corresponding DEIM error constants, each hold independently with probability at least $1-\delta$.}\label{tab:summary} 
\begin{tabular}{c|c|c|c|c}
 Method& \# indices  & Comp. cost  & DEIM error constant  & Reference \\ \hline
 DEIM  & $r$ & $\mc{O}(nr^2 )$ &  $\mc{O}(\sqrt{nr} 2^r)$ & \cite{sorensen2016deim}\\
 PQR & $r$ &$\mc{O}(nr^2)$ & $ \mc{O}(\sqrt{n}2^r)$ & \cite{drmac2016new}\\
 sRRQR  & $r$ & $\mc{O}(nr^2)$ &  $\mc{O}(\sqrt{nr})$ & \cite{drmac2017discrete} \\ 
 LS$^*$ & $\mc{O}\left(\frac{r\log (r/\delta)}{\epsilon^2}\right)$ & $\mc{O}(nr )$& $\mc{O}\left(\sqrt{\frac{n\epsilon^2}{r\log (r/\delta) (1-\epsilon)}}\right)$ & \cref{thm:sampling}  \\ 
 Hybrid$^*$  & $r$  & $\mc{O}(nr )$ & $\mc{O}\left(\sqrt{\frac{nr}{(1-\epsilon)}}\right)$  & \cref{thm:sampling} 
\end{tabular}
\end{table}

The important point is that the error constants of the hybrid point selection
approach are comparable with the best known deterministic bounds (using sRRQR);
however, the computational cost is far less than that of sRRQR, or the other
deterministic approaches. We advocate the hybrid approach since it has
reasonable computational cost, is accurate, and selects exactly $r$ indices
from the nonlinear function.

\subsection{Randomized point selection with randomized DEIM basis} Thus far, we
have described randomized point selection techniques assuming the availability of the standard DEIM
basis $W$. If the standard basis $W\in \R^{n\times r}$ is computationally
intensive to compute, we can alternatively use an R-DEIM basis (obtained for
example using \cref{alg:basic} or \cref{alg:subspace}). The leverage scores,
and the corresponding sampling probabilities $\{\pi^\beta_j\}_{j=1}^n$,  are
now computed corresponding to the basis $\Wh$ rather than the standard DEIM
basis $W$. To determine the selection operator, one may use either randomized
point selection technique---\cref{alg:lev} or \cref{alg:hyb}. In the following
result, we quantify the error in the resulting DEIM approximation---the main
challenge is now there are two sources of randomness: the sampling matrix
$\Omega$ as well as the sampling strategy that determines the selection
operator $S$.
\begin{theorem}
Let $\Wh\in \R^{n\times r}$ be obtained using \cref{alg:subspace} with oversampling parameter $p \geq 2$, and let $\Sh \in \R^{n\times r}$ obtained using the hybrid point selection algorithm \cref{alg:hyb} to define the randomized DEIM operator $\widehat\D_\text{Hy} = \Wh (\Sh^\top \Wh)^\dagger \Sh^\top$. Consider the same assumptions as in \cref{thm:expect}. Let {$0 < \delta < 1/2$} be a user-defined parameter. With probability at least $1-2\delta$
\[ \|f-\widehat\D_\text{Hy} f\|_2 \leq D_\text{Hy}\left( \|(I_n-P_W)f\|_2 + \frac{\gamma^{2q+1}}{1-\gamma}C_d\|P_Wf\|_2 \right) \]
where $D_\text{Hy}$ was defined in \cref{thm:sampling} and 
\[ C_d \equiv \frac{e\sqrt{r+p}}{p+1}\left(\frac{2}{\delta}\right)^{1/(p+1)}\left( \sqrt{n_s-r} + \sqrt{r+p} + \sqrt{2\log\frac{2}{\delta}}\right).\]
\end{theorem}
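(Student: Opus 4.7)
The plan is to combine the deterministic error formula of \cref{thm:error1} with high-probability control over the two random quantities appearing on its right-hand side: the DEIM error constant $\|\widehat\D_\text{Hyb}\|_2$, which is random through the leverage-score sampling defining $\Sh$, and the largest canonical angle $\sin\theta_{\max}$ between $\range(W)$ and $\range(\Wh)$, which is random through the Gaussian sketch $\Omega$ used in \cref{alg:subspace}. Since $\Wh$ depends only on $\Omega$ and the sampling in \cref{alg:hyb} is drawn independently conditional on $\Wh$, the two high-probability bounds can be combined by a union bound to give an overall failure probability of at most $2\delta$.

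For the first piece, I would apply \cref{thm:sampling} with the standard basis $W$ replaced by the random basis $\Wh$, which has orthonormal columns by construction. The proof of \cref{thm:sampling} uses only the orthonormality of the input basis, so it carries over verbatim conditional on $\Wh$, yielding $\|\widehat\D_\text{Hyb}\|_2 \leq D_\text{Hyb}$ with conditional probability at least $1-\delta$ over the leverage-score draws. Marginalizing over $\Omega$ gives the same unconditional bound, and the rank condition $\rank(\Dh) = r$ needed to invoke \cref{thm:error1} is automatically satisfied on this event.

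For the second piece, the expectation estimate of \cref{thm:expect} is not enough; I need a high-probability tail bound with the specific constant $C_d$. I would revisit the proof of \cref{thm:expect} (Theorem~4 of \cite{saibaba2018randomized}), whose key intermediate step is
\[ \sin\theta_{\max} \leq \frac{\gamma^{2q+1}}{1-\gamma}\, \|\Omega_2 \Omega_1^\dagger\|_2, \]
where, by rotational invariance of the Gaussian, $\Omega_1 \in \R^{r\times (r+p)}$ and $\Omega_2 \in \R^{(n_s-r)\times (r+p)}$ are independent standard Gaussian blocks obtained by projecting $\Omega$ onto the right singular vectors of $A$. Standard Gaussian tail estimates \cite[Propositions 10.3 and 10.4]{halko2011finding} give
\[ \|\Omega_1^\dagger\|_2 \leq \frac{e\sqrt{r+p}}{p+1}\left(\frac{2}{\delta}\right)^{1/(p+1)}, \qquad \|\Omega_2\|_2 \leq \sqrt{n_s-r} + \sqrt{r+p} + \sqrt{2\log(2/\delta)}, \]
each with probability at least $1 - \delta/2$, using Markov's inequality on the $(p{+}1)$-st moment of an inverse Wishart eigenvalue for the first bound and Lipschitz concentration of the operator norm for the second. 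A union bound then yields the product bound $\|\Omega_2 \Omega_1^\dagger\|_2 \leq C_d$ with probability at least $1-\delta$.

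The final step is to combine the two $1-\delta$ events by a union bound and substitute into \cref{thm:error1}, which produces the advertised inequality on a $1-2\delta$ event. The main obstacle is the upgrade from the expectation bound of \cref{thm:expect} to the tail bound with the specific constant $C_d$; once the decomposition into independent Gaussian blocks $\Omega_1, \Omega_2$ is in place, everything else amounts to assembling results already developed earlier in the paper.
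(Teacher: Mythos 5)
Your proposal is correct and follows essentially the same route as the paper: apply \cref{thm:error1}, control $\|\widehat\D_\text{Hyb}\|_2$ via \cref{thm:sampling} applied conditionally to the orthonormal basis $\Wh$, control $\sin\theta_{\max}$ via a high-probability version of \cref{thm:expect}, and combine the two failure events to get $2\delta$ (the paper phrases this through the law of total probability with conditioning on the good $\Omega$-event, which is equivalent to your marginalize-then-union-bound argument). The only cosmetic difference is that the paper simply cites \cite[Theorems 4 and 6]{saibaba2018randomized} for the tail bound with constant $C_d$, whereas you unpack that citation via the $\|\Omega_2\Omega_1^\dagger\|_2$ decomposition and the Gaussian estimates of \cite{halko2011finding}, arriving at the same constant.
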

\begin{proof}
Define the event
\[ \mc{E} = \left\{ \Omega \left| \sin \theta_{\max} \leq \frac{\gamma^{2q+1}}{1-\gamma} C_d \right. \right\}.\]
Combining \cite[Theorems 4 and 6]{saibaba2018randomized}, the probability of the complementary event satisfies $\mb{P}(\mc{E}^c) \leq \delta$. Similarly, define the event \[ \mc{F}  = \left\{ \Sh, \Omega \left| \|f- \D_\text{Hy}f\|_2 > D_\text{Hy} \left(\| (I_n-P_W)f\|_2 + \frac{\gamma^{2q+1}}{1-\gamma} C_d \|P_Wf\|_2\right)\right. \right\}.\]
By \cref{thm:sampling}, $\mb{P}(\| \D_\text{Hy}\|_2 \leq D_\text{Hy} |\mc{E}) \geq 1-\delta$ \changes{and $\rank(\D_\text{Hy}) = r$. The assumption $1 \leq r < \rank(A)$ ensures $\D_\text{Hy} \neq 0$ and $\D_\text{Hy} \neq I_n.$ } By \cref{thm:error1}, we have
\[ \mb{P}\left( \|f-\D_\text{Hy} f\|_2 \leq D_\text{Hy}\left( \left. \|(I_n-P_W)f\|_2 + \frac{\gamma^{2q+1}}{1-\gamma}C_d\|P_Wf\|_2 \right)\right| \mc{E}  \right) \geq 1-\delta,\]
or the complementary event $\mc{F}$ satisfies $\mb{P}(\mc{F}|\mc{E}) \leq \delta$. By the law of total probability
\[ \begin{aligned}
\mb{P}(\mc{F}) = & \mb{P}(\mc{F}|\mc{E})\mb{P}(\mc{E}) + \mb{P}(\mc{F}|\mc{E}^c)\mb{P}(\mc{E}^c)  
\leq     \mb{P}(\mc{F}|\mc{E}) + \mb{P}(\mc{E}^c).
\end{aligned}
\]
Therefore, $\mb{P}(\mc{F}) \leq \delta + \delta = 2\delta$. The complementary event satisfies the advertised bound. 
\end{proof}
A similar result can be derived for the leverage score approach but we omit the details.

\section{Numerical Experiments}\label{sec:num} In \cref{ssec:ex1} we
investigate the accuracy of the DEIM basis generated using the randomized
algorithms discussed in \cref{sec:randbasis}. In \cref{ssec:ex2} we investigate
the performance of the randomized point selection algorithms proposed in
\cref{sec:point}. In \cref{ssec:ex3} we apply these randomized algorithms to a
large-scale PDE-based application. All the timing results were computed on a
computing cluster in which each node has an Intel(R) Xeon(R) CPU E5-2690 processor, with 8-core
CPUs at 2.90GHz and 128GB of DDR3 RAM. The code was implemented and tested in \textsc{MATLAB} 2018a and the operating system was Ubuntu 16.04.

\subsection{Example 1: Randomized range finder}\label{ssec:ex1} In our first example, we
consider the setup of the synthetic example in~\cite[section
2.3]{peherstorfer2014localized}. The spatial domain and the parameter domain are both taken to be $\mc{D}_s =
[0,1]^2$. We define the  function $g $ as
\[ g(x_1,x_2;\mu_1,\mu_2) \equiv \frac{1}{\sqrt{h(x_1;\mu_1) + h(x_2;\mu_2) +
		0.1^2 }}.\]  
where $h(z;\mu) = ((1-z)-(0.99\cdot\mu-1))^2 $. The function that
is to be interpolated is 
\begin{equation}\label{eqn:pde} \begin{aligned}
 f(x_1,x_2;{\mu_1,\mu_2}) = &  \>g(x_1,x_2;\mu_1,\mu_2) + g(1-x_1,1-x_2; 1-\mu_1, 1-\mu_2) \\
& + \quad g(1-x_1,x_2; 1-\mu_1,\mu_2) + g(x_1,1-x_2; \mu_1, 1-\mu_2). 
\end{aligned}
\end{equation}

Depending on the parameter $\mu$, the function $f$ has a sharp peak in one of the four
corners of $\mc{D}_s$. The function is discretized on a $100\times 100$ grid in
$\mc{D}_s$ with $n=10,000$, and parameter samples are drawn from a $25\times 25$ equispaced grid
in $\mathcal{D}$. These $n_s = 625$ snapshots are stored in the snapshot matrix $A$ and are used to construct the DEIM
approximation.

\begin{figure}[!ht]\centering
\includegraphics[scale=0.35]{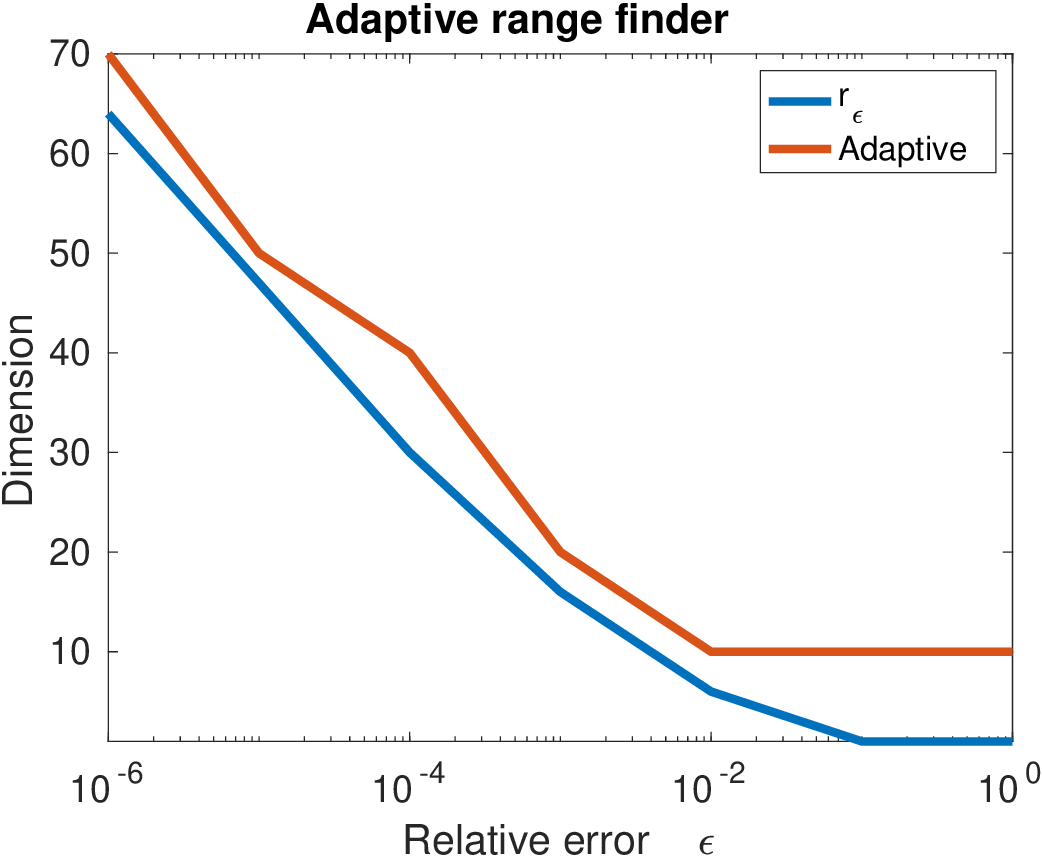}
	\caption{The dimension of the basis returned by the adaptive range
		finding algorithm compared with the truncation indices based on a cutoff tolerance. }
	\label{fig:adapt_rand}

\end{figure}

\subsubsection{Adaptive range finder} If the target subspace dimension is not
known \textit{a priori}, we use the adaptive procedure outlined in
\cref{ssec:adapt}. With the same settings as the previous example,  we use
\cref{alg:rangetwo} for determining the range. For comparison, we consider the
standard DEIM basis. Given a user-defined tolerance $\epsilon > 0$, the
dimension of the standard DEIM basis $W$ (assuming $n\neq n_s$) is taken to be 
\[
r_\epsilon(A) =  \min \left\{ r  \left| \sum_{k=r+1}^{n_s} \sigma_{k}^2(A) > \epsilon \sum_{k=1}^{n_s} \sigma_{k}^2(A) \right. \right\},
\]
since this ensures that $W$ satisfies 
\[ \| (I_n-WW^\top) A\|_F^2 \leq \epsilon \| A \|_F^2 .\]
Note that we say the dimension of a basis, even though dimension is an attribute of the subspace spanned by the basis vectors.

The standard DEIM basis has the smallest dimension satisfying the above
equality; this follows from the optimality of the SVD. Therefore, the dimension
of the basis returned by \cref{alg:rangetwo} must be at least as large as the
dimension of the standard DEIM basis. In this experiment, we investigate how
close these two dimensions are.  The tolerance $\epsilon$ is varied as 
 $\epsilon \in \{10^{2},\dots,10^{-6}\}$. For the adaptive algorithm, the
block size was set to be $10$, and the maximum iterations was taken to be $40$.
In \cref{fig:adapt_rand}, we compare the two dimensions depending on the cutoff
tolerance $\epsilon$. From the figure, it is clear that as the tolerance
$\epsilon$ decreases, both the dimensions increase.  Second,  it can be seen
that the both the dimensions are in good agreement, and the dimension returned
by the adaptive randomized algorithm is only slightly larger than $r_\epsilon$,
demonstrating that it can be used in real applications. The accuracy of the
R-DEIM basis is further investigated in \cref{sssec:rdeim}. 

\subsubsection{Subspace iterations $q$ and oversampling parameter $p$} In our
next experiment, we investigate the effects of the number of subspace
iterations $q$ and the oversampling parameter $p$. \cref{thm:expect} guarantees
that with increasing iterations $q$ the factor $\gamma^{2q+1}$ subdues the
influence of the constant $C$, since $\gamma$ is assumed to be less than $1$.
Similarly, note that the oversampling parameter $p$ appears in the denominator
of $C$; therefore, increasing the oversampling parameter $p$ results in more
accurate subspace computation. Both~\cite{gu2015subspace,halko2011finding}
recommend choosing $p=10-20$. We found this choice of parameters to be
satisfactory in our experiments and  \cref{fig:ex1_subspaces} confirm these
findings.

\begin{figure}[!ht]\centering
	\includegraphics[scale=0.3]{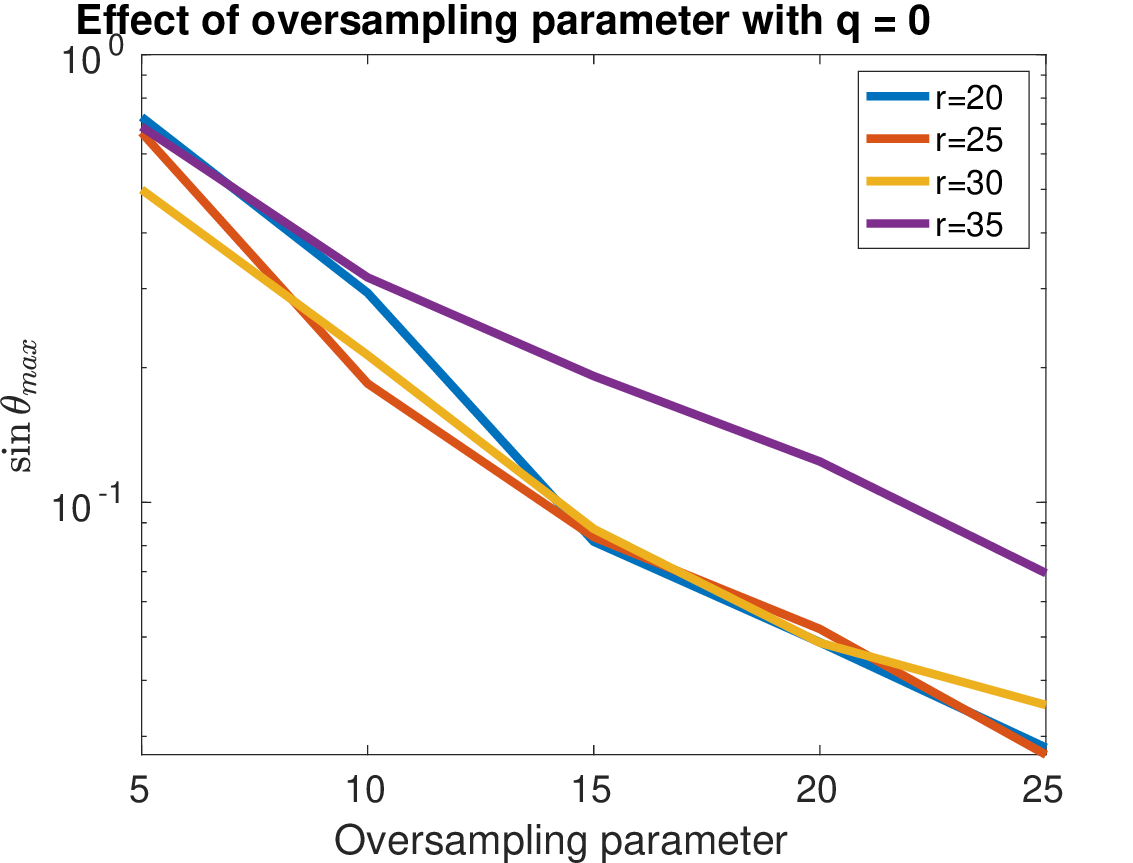}
\includegraphics[scale=0.3]{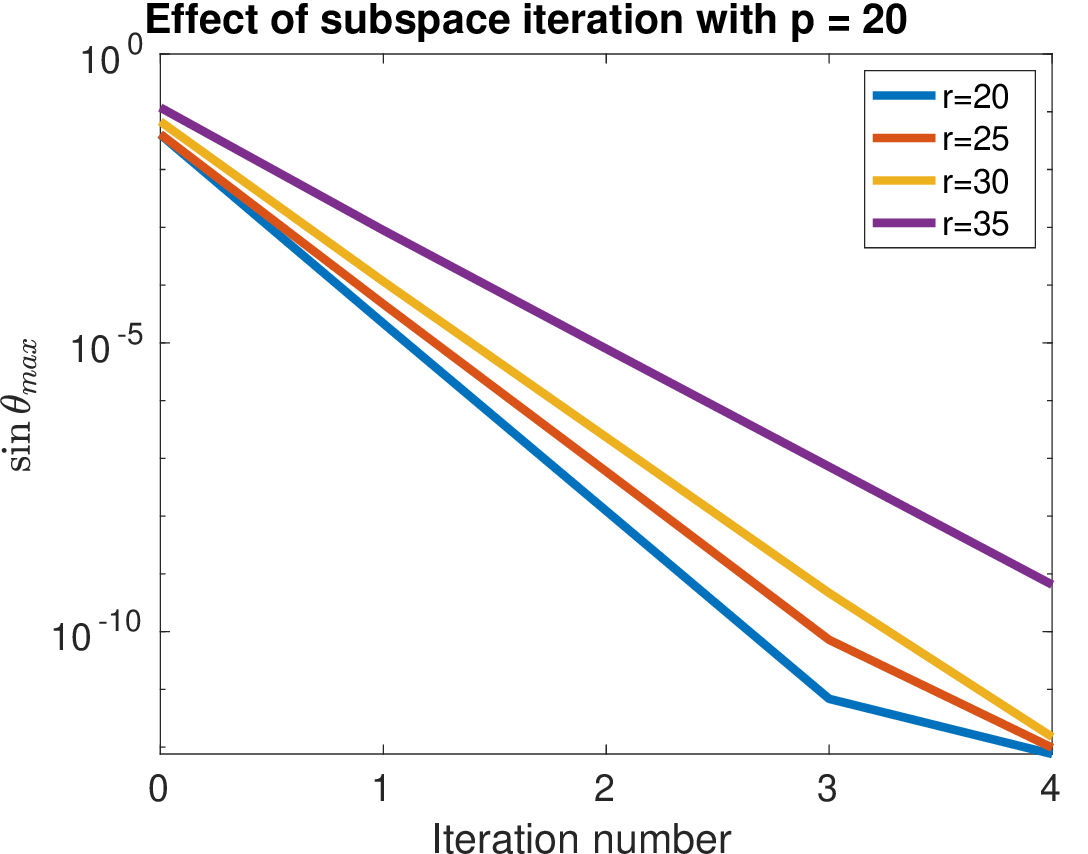}
	\caption{Example 1 -  {Effect of  (left) increasing oversampling parameter $p \in \{5,10,15,20,25\}$ (we fix $q=0$), (right)  increasing subspace iterations $q \in \{0,1,2,3,4\}$ (we fix $p=20$), on the accuracy of the R-DEIM basis measured as $\sin\theta_\text{max} = \|P_W - P_{\Wh}\|_2$. } }
	\label{fig:ex1_subspaces}
\end{figure}

\begin{figure}[!ht]\centering
	\includegraphics[scale=0.35]{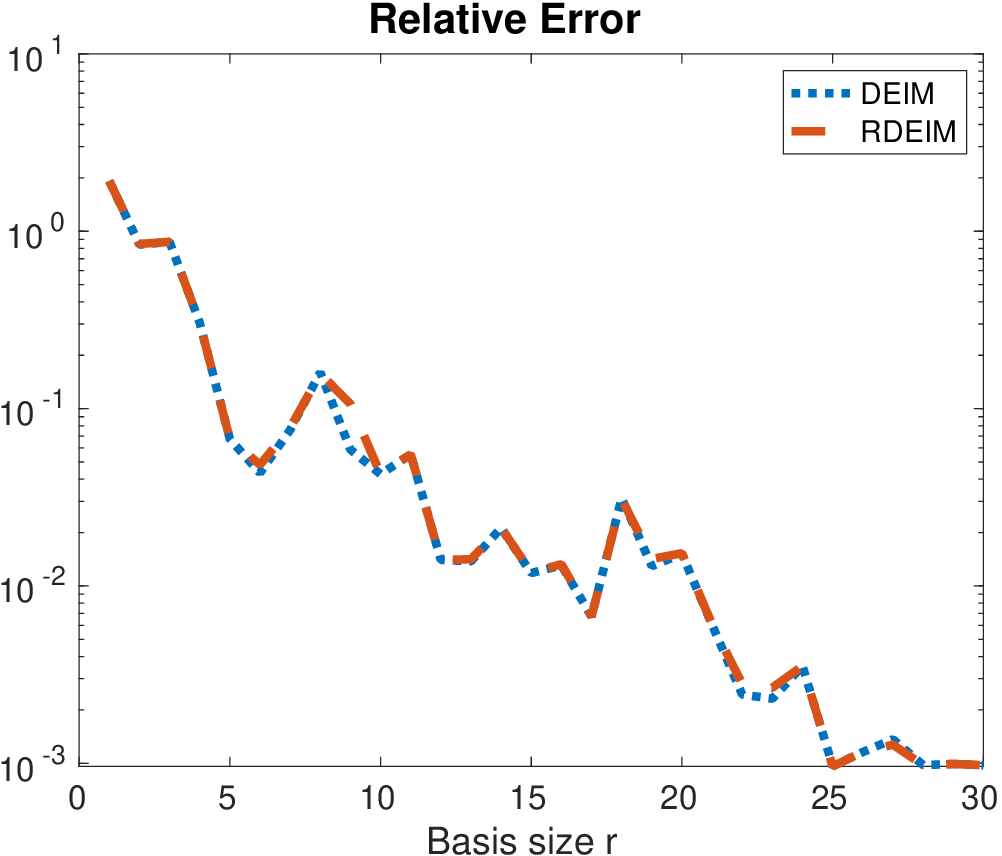}
\includegraphics[scale=0.35]{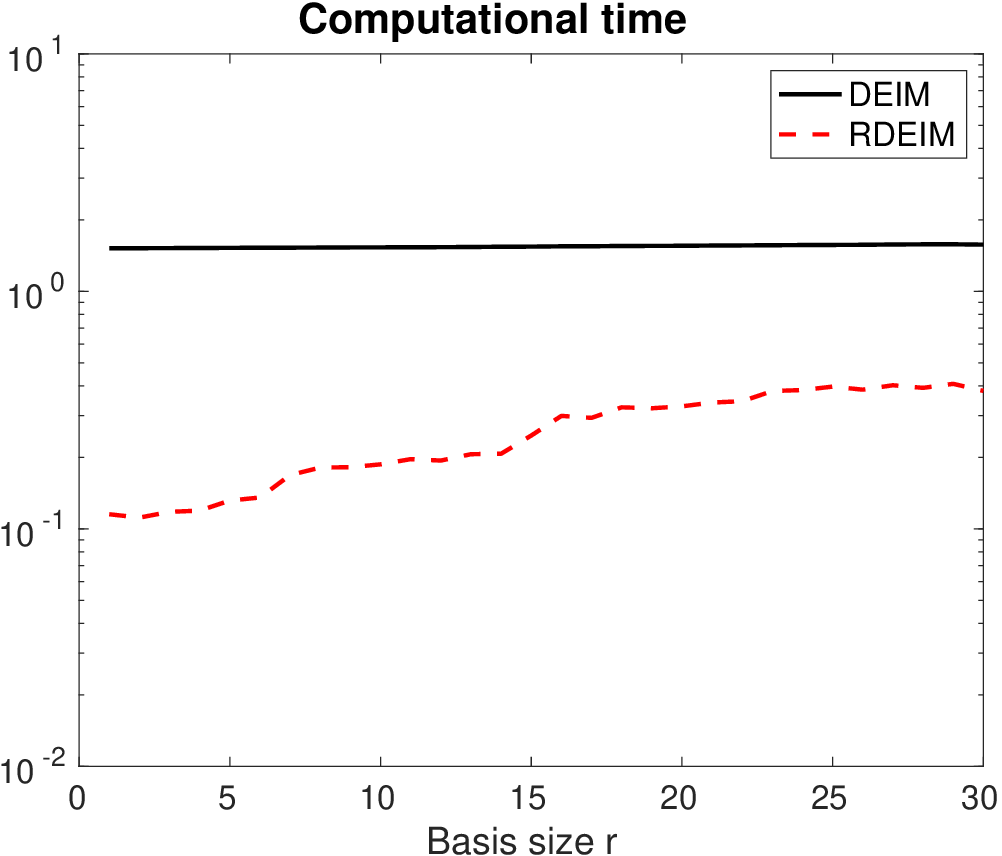}
	\caption{Example 1 - (left) Relative error using the DEIM and the
	R-DEIM approximation plotted against basis dimension $r$. R-DEIM basis was computed using \cref{alg:basic} with $r=30$ and oversampling parameter $p=10$. {(right) Timing results using DEIM and R-DEIM plotted against basis dimension $r$.} }
	\label{fig:ex1_error}
\end{figure}

\subsubsection{Accuracy of the R-DEIM basis}\label{sssec:rdeim} In this experiment, we
compare the accuracy of the R-DEIM basis with the standard DEIM basis for
Example 1. For the R-DEIM approximation, we use an oversampling parameter
$p=10$. The dimension of the R-DEIM basis is varied until $r=30$, and the error
is compared with the standard DEIM approximation. We used the PQR algorithm for
point selection. To account for the randomness, the resulting error in the
R-DEIM approximation  was averaged over $100$ runs.  As can be seen from
\cref{fig:ex1_error}, the error in the R-DEIM approximation is comparable to the
error in the standard DEIM approximation. We also see the time for computing the R-DEIM approximation is far lower than the time for the DEIM approximation.


\subsection{Example 2: Point selection}\label{ssec:ex2} This example is a continuation of the
Example 1, but we now focus on the randomized point selection. For this
example, we use the standard DEIM basis, i.e., the basis computed using the
left singular vectors of the snapshot matrix $A$. 

We first compare the two randomized point selection techniques---leverage scores
(LS), and the hybrid point selection algorithms---with the  PQR
algorithm.  Recall that theory suggests that the we have to choose the number
of samples according to the formula in \cref{thm:sampling}. Suppose we choose
the parameters $\epsilon = 0.99$, which ensures  $1/\sqrt{1-\epsilon} = 10$ and
$\delta = 0.01$. Our numerical experiments showed that the number of samples
required by the LS point selection algorithm appear to be an overestimate. In fact, the
number of samples can sometimes exceed $n$---which is antithetical to the spirit of the DEIM
approximation.  In practice, we found $\lceil 3r\log r \rceil$ samples
sufficient to provide accurate approximations. We used the same number of
samples for the hybrid point selection algorithm.

\begin{figure}[!ht]\centering
	\includegraphics[scale=0.35]{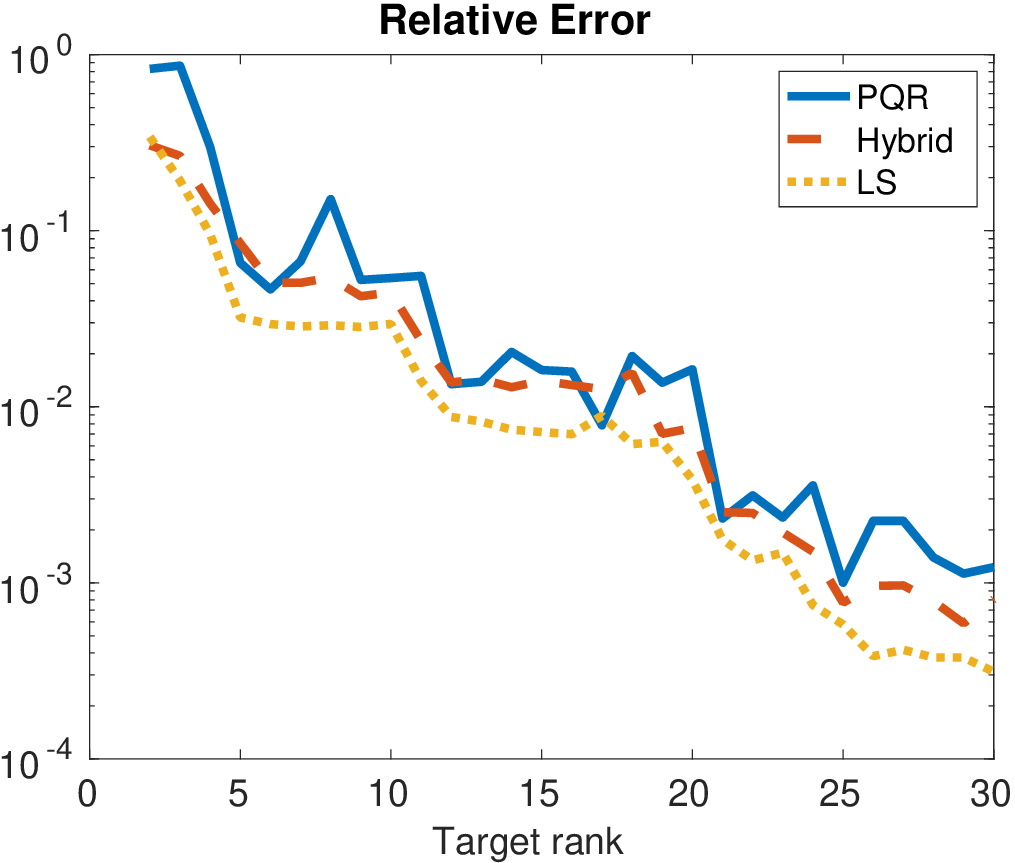}
	\includegraphics[scale=0.35]{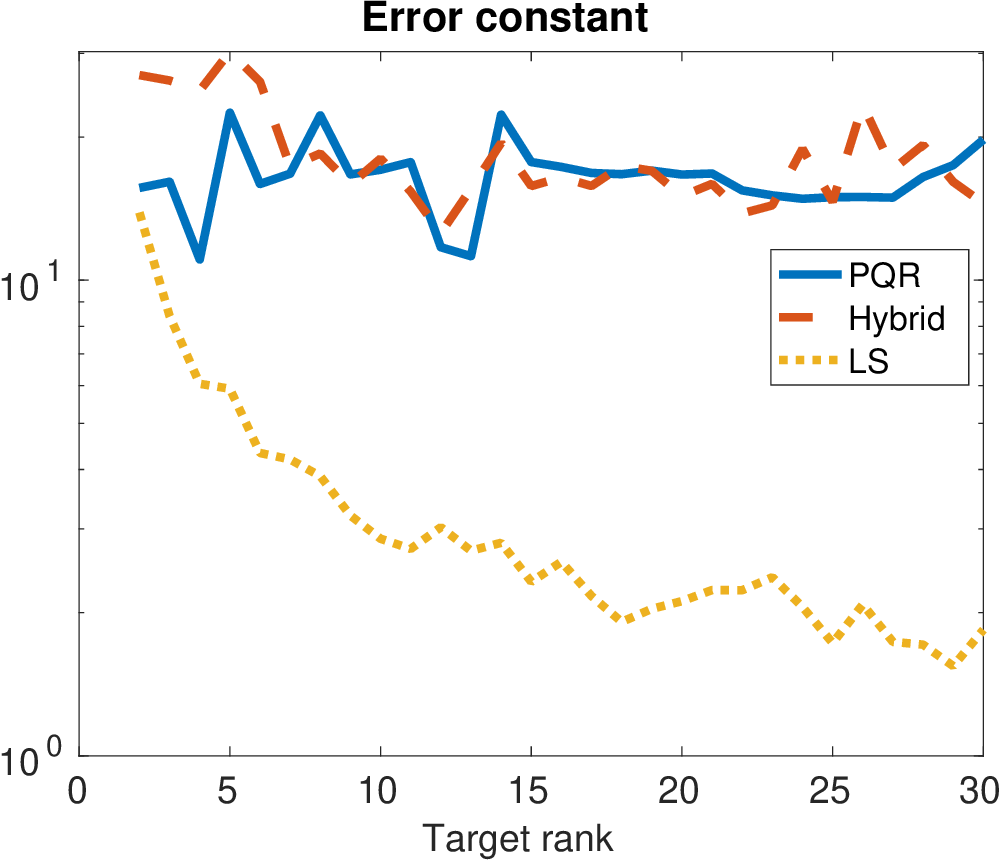}
\includegraphics[scale=0.35]{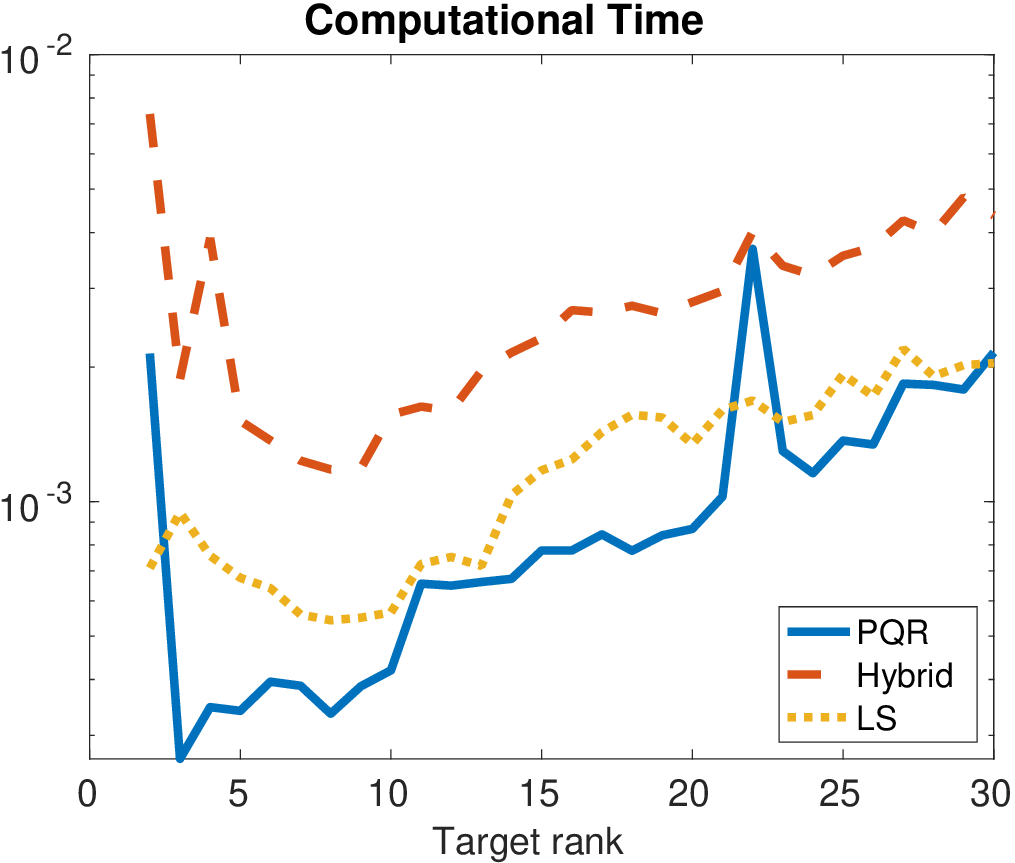}
	\caption{Example 2 - (left) Error in the DEIM approximation,  
	(right) the DEIM error constants $\|\D\|_2 $ using 
		the PQR algorithm and the hybrid approach \cref{alg:hyb}, 
		(bottom) the computational time for all the point selection methods.}
	\label{fig:example2_err}
\end{figure}

 In \cref{fig:example2_err}, we compare the error in the DEIM approximations
and the corresponding error constants $\|\D\|_2$ for the various algorithms.
The error constant is much smaller for the LS point selection algorithm
compared to both the hybrid and deterministic approaches. This is because the
number of samples in the LS point selection algorithm is much larger than $r$.
However, the accuracy of the DEIM approximation is comparable for all three
methods, and does not appear to be significantly affected by the choice of the
point selection method. {The computational time for each point selection method
is also reported, the timings were averaged over $10$ runs.}
 
\begin{figure}[!ht]\centering
	\includegraphics[scale=0.35]{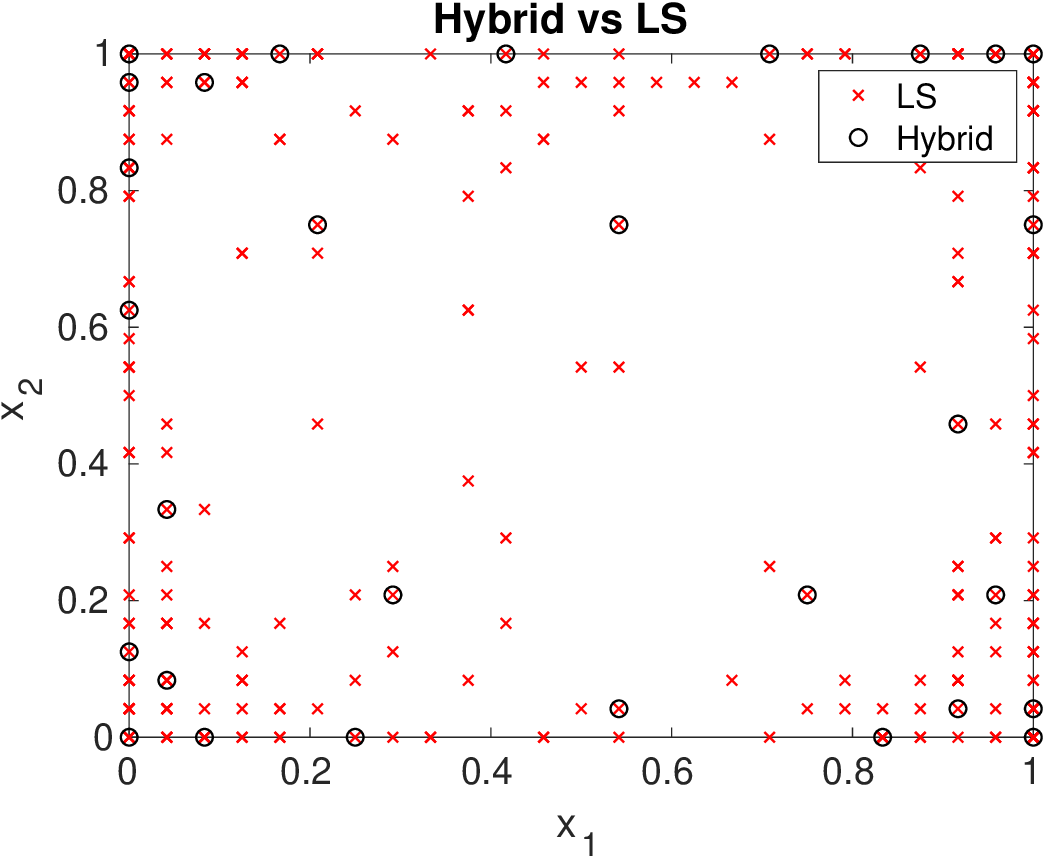}
	\includegraphics[scale=0.35]{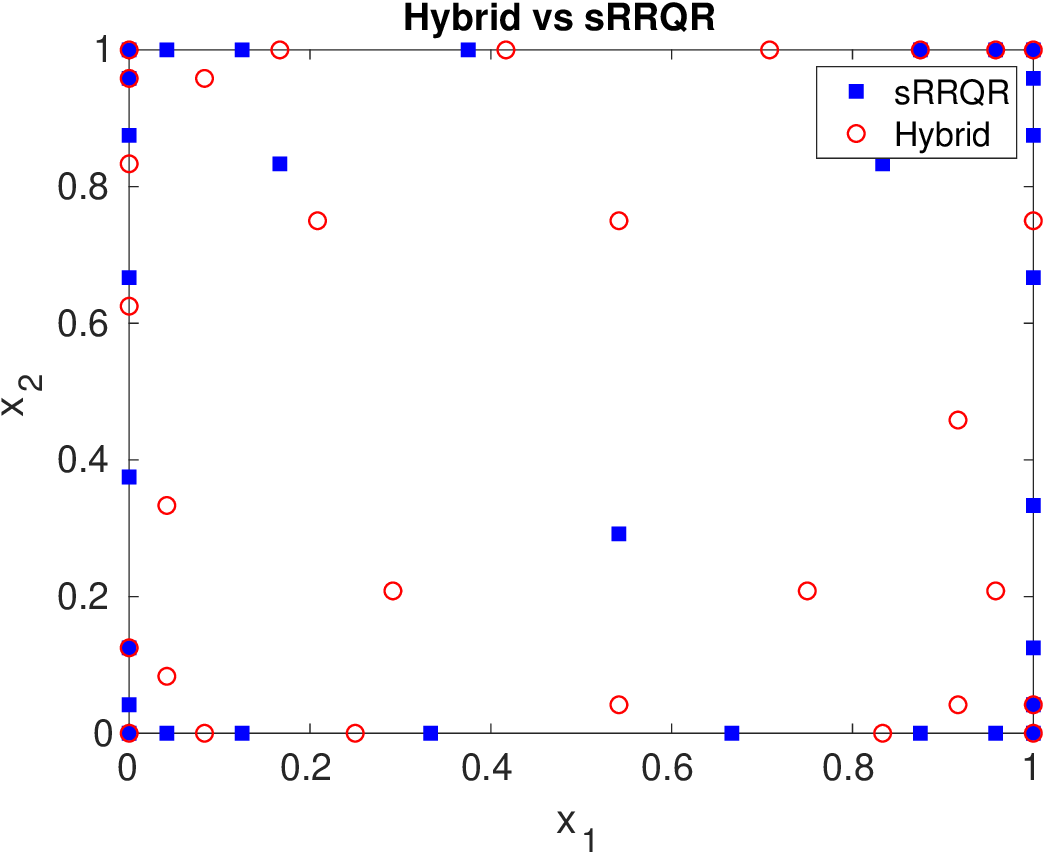}
	\caption{Example 2 - The indices chosen by the various point selection methods.}
	\label{fig:example2_comp}
\end{figure}

To provide more insight into the hybrid point selection algorithm, we compare
it with sRRQR and LS point selection algorithms. In \cref{fig:example2_comp}, 
we plot the point selected by the LS sampling approach on the left panel. By construction, the
hybrid approach subsamples from the points selected by the LS algorithm, and
these points are overlaid in the figure. In the right panel of the same figure,
the hybrid points are compared with those selected by sRRQR. It it interesting
to note that several points overlap between the hybrid and the sRRQR
approaches, even though they were selected using different algorithms. 

Our conclusion is that the proposed hybrid point selection algorithm is a good
compromise between the deterministic (e.g., PQR, sRRQR) and randomized point
selection algorithms.  Compared to the deterministic algorithms it is
computationally efficient and has comparable accuracy. The hybrid point selection algorithm
also has comparable accuracy to the LS algorithm but is advantageous since only
$r$ components of the nonlinear function need to be evaluated. 

\subsection{Example 3: PDE-based application}\label{ssec:ex3} We consider a example
from~\cite[section 8.4]{quarteroni2015reduced} involving a parameterized
advection-diffusion PDE that models, for example, the evolution of the
concentration of a pollutant. Consider the following PDE defined on a domain
$\mathcal{D}=[0,1]^2$ with boundary $\partial \mathcal{D}$
\begin{eqnarray}
-\mu_1 \Delta u  + \boldsymbol{b}(\mu_2) \cdot \nabla u + a_0 u = &  s(\mathbf{x};\boldsymbol{\mu}) \quad & \mathbf{x} \in \mathcal{D}\\
\mu_1 \mathbf{n}\cdot \nabla u = & 0 & \mathbf{x} \in \partial \mathcal{D}.
\end{eqnarray}
Here, $\boldsymbol{\mu} = \bmat{\mu_1 & \dots & \mu_5}$, $a_0$ is a positive
constant, and $\mathbf{n}$ is the normal vector. The wind velocity is taken to
be  $\boldsymbol{b}(\mu_2) = [\cos\mu_2,\sin\mu_2]$, which is a constant in
space but depends nonlinearly on the parameter $\mu_2$. The source term
$s(\boldsymbol{\mu})$ has the form of a Gaussian function centered at
$(\mu_3,\mu_4)$ and spread $\mu_5$, i.e.,
\[ s(\mathbf{x};\boldsymbol{\mu}) = \exp\left( -\frac{(x_1-\mu_3)^2 + (x_2-\mu_4)^2}{\mu_5^2}\right).\]
The cost of solving the PDE for many different values of $\boldsymbol{\mu} $ is
high and therefore, it is computationally beneficial to develop a reduced order
model that makes the online solution of the PDE feasible. The nonlinear
dependence on the parameters arises from the source term $s(\mathbf{x}; \boldsymbol\mu)$.
To tackle this, we use the POD-DEIM approach.

\begin{figure}[!ht]\centering
	\includegraphics[scale=0.35]{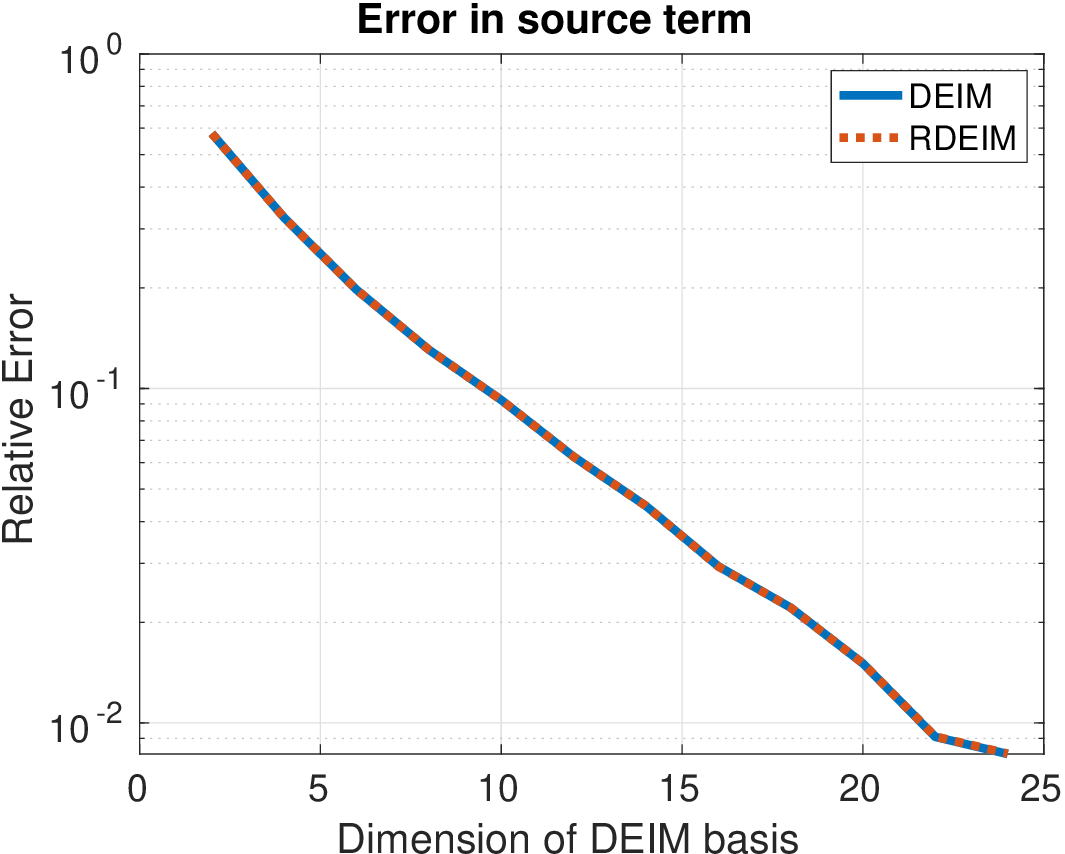}
	\includegraphics[scale=0.35]{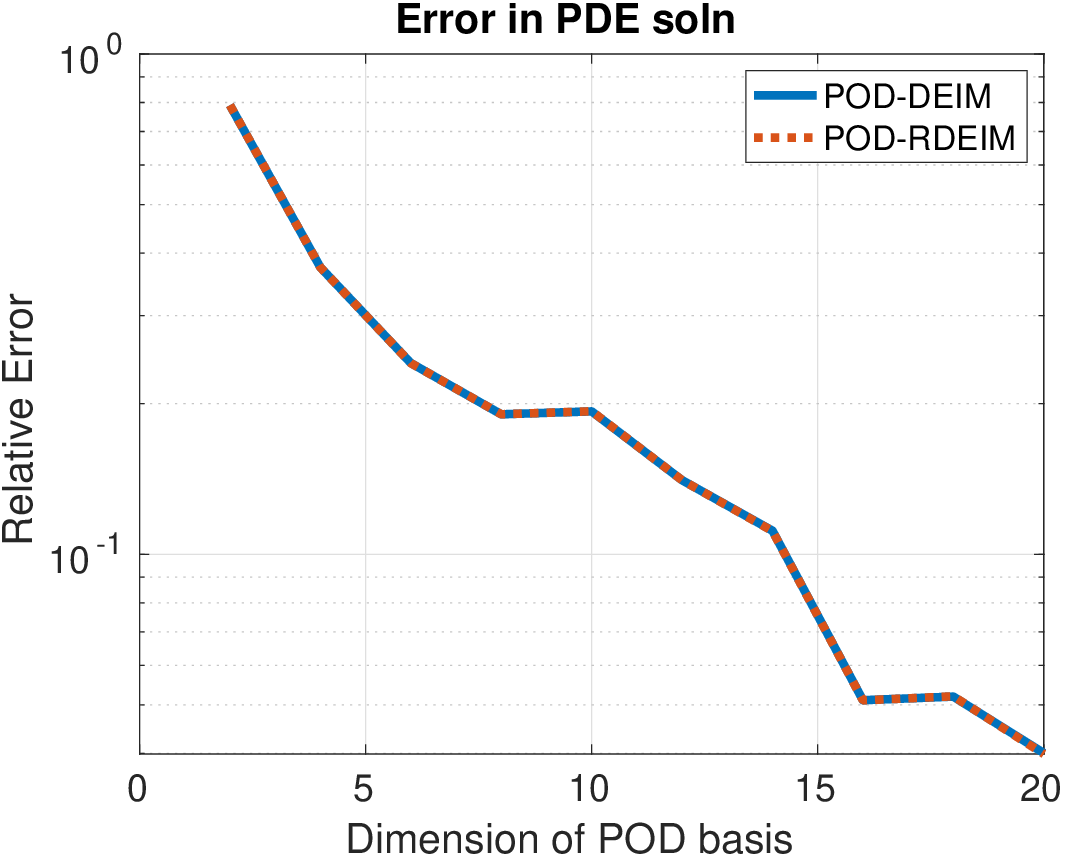}
	\caption{ (left) The error in the DEIM and R-DEIM approximation for the source term $s(\mathbf{x};\boldsymbol{\mu})$. The R-DEIM basis was constructed using \cref{alg:basic}, the oversampling parameter was $p=20$. We used the Hybrid algorithm for the point selection. (right) The error in the DEIM and R-DEIM approximation {for the PDE}.}
	\label{fig:ex3deim}
\end{figure}

Our first experiment is similar to that in~\cite[section
10.5.1]{quarteroni2015reduced}. We first consider the cost of approximating the
source term $s(\mathbf{x};\boldsymbol{\mu})$ over the range of parameters
$\mu_3 \in [0.2,0.8]$, $\mu_4 \in [0.15,0.35] $ and $\mu_5$  is chosen between
$ [0.1,0.35]$. A training set for  $\boldsymbol\mu$ is generated by Latin
hypercube sampling with $n_s = 1000$ training points. Two different
approximations are generated using DEIM and R-DEIM. The number of DEIM basis
vectors used were $r = 24$ and was determined based on the singular value decay
of the snapshot matrix.  The R-DEIM basis is also fixed to be of size $r = 24$
and was computed using \cref{alg:basic} with an oversampling parameter $p=20$.
The error is computed by averaging over $200$ different randomly generated test
points. The results are displayed in \cref{fig:ex3deim}. The hybrid point
selection algorithm is used for both the standard DEIM basis and the randomized
DEIM basis---the same parameters are used as in \cref{ssec:ex2}. As can be seen
the error between the two different methods are comparable.  For this
application, the number of quadrature nodes is {$165,888$. It is worth
mentioning that the CPU time for computing the compact SVD is $7.67$ seconds
whereas the CPU time for the  randomized range finder is $0.25$ seconds.
The computational time for the point selection was comparatively
$0.06$ seconds. The speedup is more impressive for larger problems;
see~\cite{bach2019fixed} for more detailed comparisons on large-scale
problems.}


Our second experiment is similar to~\cite[section
10.5.2]{quarteroni2015reduced}, in which the parameters $\mu_1 = 0.03$ and
{$\mu_5=0.25$} are fixed and the remaining parameters are taken to be in the range
$\mu_2 \in [0,2\pi]$, $\mu_3\in[0.2,0.8]$ and $\mu_4\in[0.15,0.35]$. The goal
is then to compute a ROM for solving the PDE~\cref{eqn:pde} for the above
chosen range of parameters. The source term is approximated using the DEIM and
the R-DEIM approaches, as described in the previous experiment (the dimension
$r=24$ was used for both approximations. The POD algorithm is computed using
$1000$ snapshots and the POD dimension $k=20$ is used to construct the reduced
basis space. The error is shown in the right panel of \cref{fig:ex3deim}, where the POD-R-DEIM
approximation is compared against the POD-DEIM approximation. It is seen that
both the errors are comparable which validates our approach.

\section{Conclusions} We have provided randomized algorithms for tackling the
two main bottlenecks of the standard DEIM algorithm. First, we propose several
randomized algorithms for approximately computing the DEIM basis and highlight
various benefits of the randomized algorithms including adaptivity.  Second, we
proposed two randomized point selection algorithms--- one based on leverage
score sampling, the other combines leverage score and rank-revealing
factorizations. We provide detailed analysis of the error in the resulting
algorithms that clearly show the trade-off between computational cost and
accuracy. The proposed algorithms are more efficient than the standard
techniques and have comparable accuracy.  Numerical experiments in
\cref{sec:num} confirm these findings and give insight into the choice of parameters.

\section{Acknowledgments} The author would like to thank  Ilse C.F.\ Ipsen and
Zlatko Drma{\v{c}} for discussions and Ivy Huang for her help with this manuscript.

\appendix
\bibliography{refs}
\bibliographystyle{abbrv}

\end{document}